\newtheorem{thm}{Theorem}[section]
\newtheorem{Con}[thm]{Conjecture}
\newtheorem{lem}[thm]{Lemma}
\newtheorem{pro}[thm]{Proposition}
\theoremstyle{definition}
\newtheorem{rem}[thm]{Remark}
\numberwithin{equation}{section}
\newcommand{\re}{\textup{Re}}
\newcommand{\im}{\textup{Im}}
\begin{document}

\baselineskip=17pt

\title[Large values of $L(1,\chi)$ for $k$-th order characters $\chi$]{Large values of $L(1,\chi)$ for $k$-th order characters $\chi$ and applications to character sums}

\author[Youness Lamzouri]{Youness Lamzouri}

\address{Department of Mathematics and Statistics,
York University,
4700 Keele Street,
Toronto, ON,
M3J1P3
Canada}

\email{lamzouri@mathstat.yorku.ca}

\date{}

\begin{abstract}  
For any given integer $k\geq 2$ we prove the existence of infinitely many $q$ and characters $ \chi\pmod q$ of order $k$, such that $|L(1,\chi)|\geq (e^{\gamma}+o(1))\log\log q$. We believe this bound to be best possible. When the order $k$ is even, we obtain similar results for $L(1,\chi)$ and $L(1,\chi\xi)$ where $\chi$ is restricted to even (or odd) characters of order $k$, and $\xi$ is a fixed quadratic character. As an application of these results, we exhibit large even order character sums, which are likely to be optimal. 

\end{abstract}

\subjclass[2010]{Primary 11M20, 11M06}

\thanks{The author is partially supported by a Discovery Grant from the Natural Sciences and Engineering Research Council of Canada.}

\maketitle

\section{Introduction}

Dirichlet characters of a fixed order appear naturally in many applications in number
theory. The quadratic characters have been extensively studied, due in large part to their connection to fundamental arithmetic objects including class numbers  and ranks of elliptic curves. By contrast, higher order characters have received considerably less attention up until very recently, when Granville and Soundararajan \cite{GrSo3} proved the remarkable result that the classical P\'olya-Vinogradov inequality can be improved for characters of a fixed odd order. Other notable work on higher order characters include the results of Baier and Young \cite{BaYo} on large sieve inequalities and moments of Dirichlet $L$-functions attached to cubic characters, which build on previous work by Heath-Brown \cite{HB} on cubic symbols; the large sieve inequalities for quartic characters by Gao and Zhao \cite{GaZa}; and the work of Blomer, Goldmakher and Louvel \cite{BGL} on large sieve inequalities and double Dirichlet series associated with certain higher order Hecke characters.

The connection between Dirichlet characters and class numbers  was discovered by Dirichlet in 1839, who established a formula that expresses the class number of a quadratic field $\mathbb{Q}(\sqrt{d})$ in terms of  $L(1,\chi_d)$, the value of the corresponding Dirichlet $L$-function at $1$, where $\chi_d:=\left(\frac{d}{\cdot}\right)$ is the Kronecker symbol.
 Motivated by Dirichlet's class number formula, Littlewood \cite{Li} studied how large can  $L(1, \chi)$ be, in terms of the conductor of $\chi$. Assuming  the Generalized Riemann Hypothesis (GRH), Littlewood proved that for any non-principal  primitive character $\chi \pmod q$, one has
 \begin{equation}\label{BoL1GRH}
|L(1,\chi)|\leq (2e^{\gamma}+o(1))\log\log q.
\end{equation}
On the other hand, under the same hypothesis,  Littlewood \cite{Li} showed that there exist infinitely many fundamental discriminants $d$ (both positive and negative) for which 
\begin{equation}\label{OmegaL1}
L(1,\chi_d)\geq (e^{\gamma}+o(1))\log\log |d|.
\end{equation}
This omega result  was later established unconditionally by  Chowla \cite{Ch}. 

To understand which of the bounds \eqref{BoL1GRH}  or \eqref{OmegaL1} is closer to the maximal values of $L(1, \chi_d)$,  Montgomery and Vaughan \cite{MoVa2} constructed a probabilistic random model for these values and made several conjectures on their distribution.  Most of these conjectures were subsequently proved by Granville and Soundararajan \cite{GrSo1}. Among their results, Granville and Soundararajan  obtained an asymptotic formula for the distribution function of $L(1,\chi_d)$, showing that the tail of this distribution is double exponentially decreasing.  In particular, their work  gives strong support to the conjecture that Chowla's omega result \eqref{OmegaL1} corresponds to the true nature of extreme values of $L(1,\chi_d)$. In \cite{GrSo2}, Granville and Soundararajan proved similar results for the distribution of the values $|L(1,\chi)|$ as $\chi$ varies over the  non-principal primitive characters modulo $q$ with $q\leq Q$.  Their results give very solid evidence for the following  widely believed conjecture.
\begin{Con}\label{ConjectureL1} Let $Q$ be large. Then 
$$
\max_{q\leq Q} \ \max_{\substack{\chi\neq\chi_0\pmod q\\ \chi  \text{ primitive}}} |L(1,\chi)|= (e^{\gamma}+o(1))\log\log Q.
$$
\end{Con}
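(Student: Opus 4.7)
The plan is to split the conjectured equality into its upper and lower halves, since the lower bound is classical while the upper bound constitutes the deep open content of the conjecture.

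For the lower bound $\max|L(1,\chi)|\geq (e^{\gamma}+o(1))\log\log Q$, I would invoke Chowla's unconditional omega result \eqref{OmegaL1}: there exist infinitely many fundamental discriminants $d$ with $L(1,\chi_d)\geq (e^{\gamma}+o(1))\log\log|d|$. Restricting to $|d|\leq Q$ forces the double maximum in Conjecture \ref{ConjectureL1} to be at least $(e^{\gamma}+o(1))\log\log Q$. Alternatively, for any fixed order $k\geq 2$ one could draw the extremal characters from the main theorem of this paper, recovering the same bound.

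For the upper bound, the baseline is Littlewood's conditional estimate \eqref{BoL1GRH}, which under GRH yields the constant $2e^{\gamma}$. The spare factor of $2$ enters because GRH only permits truncating the Euler product at height $y=(\log q)^{2+o(1)}$: one has an approximation
$$\log L(1,\chi)=\sum_{p\leq y}\frac{\chi(p)}{p}+O(1),$$
after which Mertens' theorem contributes $e^{\gamma}\log y=2e^{\gamma}\log\log q$ to the trivial pointwise bound. To recover the sharp constant $e^{\gamma}$, one would have to truncate at the much smaller height $y=\log q$, establishing that for every primitive $\chi\pmod q$,
$$|L(1,\chi)|=\biggl|\prod_{p\leq \log q}\Bigl(1-\frac{\chi(p)}{p}\Bigr)^{-1}\biggr|\bigl(1+o(1)\bigr),$$
and then applying $|1-\chi(p)/p|^{-1}\leq (1-1/p)^{-1}$ together with Mertens.

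The main obstacle is precisely this pointwise short-range Euler product approximation. One strategy I would attempt is to combine two ingredients: (i) the distributional results of Granville and Soundararajan \cite{GrSo2}, which show that the measure of $\chi\pmod q$ with $|L(1,\chi)|$ exceeding $(e^{\gamma}+\epsilon)\log\log q$ is doubly exponentially small in $\epsilon\log\log q$, and (ii) a hybrid large sieve over $q\leq Q$ tailored to rule out the rare characters whose extra contribution would come from Euler factors in the critical range $\log q<p\leq(\log q)^2$. Converting these on-average statements into a uniform pointwise bound seems to require zero-free regions for individual $L(s,\chi)$ that go genuinely beyond GRH near $s=1$, and this is where any plausible attack stalls. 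The conjecture should therefore be regarded as an essentially open problem whose lower half is in hand but whose upper half calls for a substantially new input.
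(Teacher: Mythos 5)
This statement is labeled as a \emph{conjecture} in the paper and is not proved there; the author only cites the distributional results of Granville and Soundararajan as strong evidence, not as a proof. You correctly recognize this, and your survey of the two halves is accurate: the lower bound is unconditional via Chowla's result \eqref{OmegaL1} (and is re-derived with order restrictions in Theorem \ref{AllOrders}), while the upper bound is genuinely open. Your explanation of where the extraneous factor of $2$ in Littlewood's GRH bound \eqref{BoL1GRH} comes from is also right: GRH only lets one truncate the Euler product for $L(1,\chi)$ at height roughly $(\log q)^{2}$ with controlled error, and Mertens' theorem then yields $e^{\gamma}\log\bigl((\log q)^{2}\bigr)=2e^{\gamma}\log\log q$, whereas a pointwise truncation at height $\log q$ would give the conjectured constant $e^{\gamma}$. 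Your diagnosis that no current technology --- not even GRH --- supplies this sharper pointwise truncation uniformly in $\chi$ is the correct reason the conjecture remains open, so treating the ``upper half'' as beyond reach (rather than claiming to prove it) is exactly the honest conclusion. In short, there is no gap in your reasoning to point out, because you are not claiming a proof of an open problem; what you have written is a correct account of the state of the art surrounding Conjecture~\ref{ConjectureL1}.
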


Despite all the progress made on $L(1,\chi_d)$, very little is known on the values of $L(1,\chi)$ for higher order characters $\chi$.  The main difficulty is that, unlike the quadratic case where one is equipped with the powerful law of quadratic reciprocity,  higher reciprocity laws for $k$-th order symbols are not easy to apply to Dirichlet characters of order $k$. 

In this paper, we exhibit large values of $|L(1,\chi)|$ for $k$-th order characters $\chi$. We also apply our work to improve recent results of Goldmakher and the author \cite{GoLa}, and Bober \cite{Bo}, by obtaining lower bounds for even order character sums, which are likely to be optimal. 
Our first theorem extends Chowla's omega result \eqref{OmegaL1} to characters of any given order $k$. In view of Conjecture \ref{ConjectureL1} we believe our bound to be best possible. 

\begin{thm}\label{AllOrders}
Let $k\geq 2$ be fixed, and $Q$ be large. There exists a constant $c_k>0$ for which there are at least $Q\exp(-c_k\log Q/\log\log Q)$ primitive characters $\chi$ of order $k$ and conductor $q \leq Q$, such that 
$$|L(1,\chi)|\geq e^{\gamma} \log\log Q +O_k(1).$$
\end{thm}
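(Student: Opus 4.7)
The plan is to produce many primes $q\leq Q$ on which every primitive character $\chi$ of order $k$ modulo $q$ satisfies $\chi(p)=1$ for all primes $p\leq y$, with $y=\log Q$. For any such $\chi$, the Euler product and Mertens' theorem yield
$$\log|L(1,\chi)| \;=\; \sum_{p\leq y}\log(1-1/p)^{-1} + \re\sum_{p>y}\log(1-\chi(p)/p)^{-1} \;=\; \gamma+\log\log\log Q + (\text{tail}) + o(1),$$
so the target $|L(1,\chi)|\geq e^{\gamma}\log\log Q + O_k(1)$ follows once the tail is shown to be $O_k(1)$.

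To produce the primes, let $\mathcal{P}=\{p \text{ prime} : p\leq y\}$ and form the Kummer-type compositum $K:=\mathbb{Q}(\zeta_k,\,p^{1/k}:p\in\mathcal{P})$. A prime $q$ unramified in $K$ splits completely in $K$ if and only if $q\equiv 1\pmod k$ and every $p\in\mathcal{P}$ is a $k$-th power residue modulo $q$; for such $q$, each of the $\phi(k)$ primitive characters of order $k$ modulo $q$ automatically satisfies $\chi(p)=1$ on $\mathcal{P}$. Generically $[K:\mathbb{Q}]=\phi(k)\cdot k^{\pi(y)}$, which with $y=\log Q$ equals $\exp\bigl(O_k(\log Q/\log\log Q)\bigr)$; an effective form of the Chebotarev density theorem then supplies at least
$$\#\{q\leq Q \text{ prime}: q \text{ splits completely in } K\}\;\gg\;\frac{Q}{[K:\mathbb{Q}]\log Q}\;\gg\; Q\exp\!\bigl(-c_k\log Q/\log\log Q\bigr),$$
and taking the $\phi(k)$ characters over each produces the count claimed in the theorem.

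It remains to argue that for all but a negligible proportion of the characters so constructed, the tail $\re\sum_{p>y}\chi(p)/p$ is $O_k(1)$. A second-moment calculation across the constructed characters, combined with orthogonality within the order-$k$ family (Ramanujan-sum style) on each modulus $q$, shows that the tail has variance $O(1/y)$. Thus only a fraction $o(1)$ of characters fail the bound $|\text{tail}|\leq C_k$; discarding these does not affect the count, and the remaining characters all satisfy $|L(1,\chi)|\geq e^{\gamma}\log\log Q + O_k(1)$.

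The main obstacle is the effective Chebotarev input. Since $K/\mathbb{Q}$ is typically non-abelian and $[K:\mathbb{Q}]$ is of size $Q^{o(1)}$, the unconditional Lagarias--Odlyzko range is overshot at $y=\log Q$. The natural unconditional route is sieve-theoretic: by Kummer theory the extension $K/\mathbb{Q}(\zeta_k)$ \emph{is} abelian, so the splitting condition for $q$ translates to a congruence condition on the prime ideals above $q$ in $\mathbb{Q}(\zeta_k)$, which can be handled by a Brun-type sieve or a large-sieve inequality over $\mathbb{Q}(\zeta_k)$ to yield an unconditional count of the required strength. Carrying out this reduction carefully -- tracking the dependence on the growing number $\pi(y)$ of auxiliary conditions -- is the delicate technical heart of the argument.
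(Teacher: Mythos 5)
Your proposal identifies the right target (force $\chi(p)=1$ for all primes $p\le y\sim\log Q$, then approximate $L(1,\chi)$ by a truncated Euler product and control the tail), but the two load-bearing steps are not sound.

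\textbf{The Chebotarev input is the genuine obstruction, and your proposed workaround does not close it.} You correctly observe that you need primes $q\le Q$ splitting completely in $K=\mathbb{Q}(\zeta_k,\,p^{1/k}:p\le y)$, and that with $[K:\mathbb{Q}]\asymp k^{\pi(y)}=Q^{o(1)}$ the unconditional effective Chebotarev range is far overshot. Reducing to the abelian extension $K/\mathbb{Q}(\zeta_k)$ does not rescue this: after expanding the splitting condition by orthogonality, the main term $\pi(Q;k,1)/k^{\pi(y)}$ must be separated from $\sim k^{\pi(y)}$ off-diagonal sums of $k$-th power residue symbols $\sum_{q}\psi_q(n)$, and no unconditional bound of the required uniformity (over $n$ that are $y$-smooth with $\pi(y)\sim\log Q/\log\log Q$ prime factors) is available. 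A Brun-type lower-bound sieve faces the same error terms and, in addition, parity and dimension obstructions when the number of sieving conditions grows with $Q$. In short, the ``delicate technical heart'' you flag is not a technical gap but a wall. The paper sidesteps it entirely with a pigeonhole plus composite-modulus device: for primes $q\in(\sqrt Q,2\sqrt Q)$, $q\equiv 1\bmod k$, one need not hit the specific tuple $(\psi_q(p_1),\dots,\psi_q(p_\ell))=(1,\dots,1)$; rather, \emph{some} tuple in $U_k^\ell$ is attained by $\gg\sqrt Q\,k^{-\ell}/\log Q$ primes (Lemma~\ref{Pigeonhole}), and for any two such primes $q_1,q_2$ the character $\widetilde\psi_m=\psi_{q_1}\overline{\psi_{q_2}}$ of modulus $m=q_1q_2$ is primitive of order $k$ (Lemma~\ref{character}) and satisfies $\widetilde\psi_m(p)=1$ for all $p\le\log Q$, with no equidistribution input whatsoever.

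\textbf{The second-moment tail bound is quantitatively insufficient.} Any orthogonality or large-sieve variance bound for the tail runs over the \emph{whole} ambient family (all order-$k$ characters of conductor $\le Q$, or all of $\mathcal F_k(Q)$), of size roughly $Q/\log Q$ or $Q/(\log Q)^2$. A variance of size $O(1/y)$ gives an exceptional count of order $Q/(\log Q)^2$, which dwarfs the constructed subfamily of size $Q\exp(-c_k\log Q/\log\log Q)$; you cannot conclude that most of \emph{your} characters have a small tail. The paper copes by taking the $2r$-th moment with $r\asymp\log Q/\log\log Q$ via Elliott's large sieve for $k$-th order characters of prime conductor (Lemma~\ref{Elliott}, applied after fixing $q_1$ and summing over $q_2$, which is why the composite-modulus construction also pays off here), producing an exceptional set of size $\ll Q\exp(-c'\log Q/\log\log Q)$ with $c'>c_k$. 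A direct replacement for your construction would need the analogous $2r$-th moment over order-$k$ characters of prime modulus, but that is moot until the splitting-prime count is established.
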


Let $\chi \pmod q$ be a primitive character.  An important quantity attached to $\chi$ is 
$$M(\chi):=\max_{x}\left|\sum_{n\leq x}\chi(n)\right|.$$
This character sum has been extensively studied over the past century. The first non-trivial bound on $M(\chi)$, proved independently by P\'olya \cite{Po} and Vinogradov \cite{Vi} in 1918, asserts that 
$$M(\chi)\ll \sqrt{q} \log q.$$
This upper bound remains the strongest known outside of special cases. However, conditionally  on the GRH, Montgomery and Vaughan \cite{MoVa1} proved that 
\begin{equation}\label{MVBound}
M(\chi)\ll \sqrt{q} \log\log q.
\end{equation}
Recently, in a  groundbreaking paper \cite{GrSo3}, Granville and Soundararajan  improved both the P\'olya-Vinogradov inequality and the Montgomery-Vaughan GRH bound for characters of a given odd order. More precisely, they showed that if $g\geq 3$ is an odd integer, and $\chi\pmod q$ is a character of order $g$, then
\begin{equation}\label{GranvilleSoundPVI}
M(\chi)\ll  \begin{cases}  \displaystyle{\sqrt{q}(\log q)^{1-\delta_g}} & \text{ unconditionally},\\
 \displaystyle{\sqrt{q}(\log\log q)^{1-\delta_g}} & \text{ under GRH},
\end{cases}
\end{equation}
for some $\delta_g>0$. These bounds were subsequently sharpened by Goldmakher \cite{Go}.

The situation for even order characters is completely different. Indeed, an old result of Paley \cite{Pa} asserts the existence of an infinite family of quadratic characters $\chi \pmod q$ for which 
\begin{equation}\label{Paley}
M(\chi)\gg \sqrt{q}\log\log q,
\end{equation}
 showing that the Montgomery-Vaughan bound \eqref{MVBound} is sharp in this case. This was extended to characters of a given even order by Granville and Soundararajan \cite{GrSo3} under the assumption of GRH. Recently, Goldmakher and the author \cite{GoLa} obtained this result unconditionally. 

Granville and Soundararajan \cite{GrSo3} also refined the GRH bound of Montgomery and Vaughan \eqref{MVBound} for all characters $\chi \pmod q$. More specifically, assuming the GRH,  they showed that
\begin{equation}\label{GRHCharSum}
M(\chi)\leq  \begin{cases}  \displaystyle{\left(\frac{2e^{\gamma}}{\pi}+o(1)\right)\sqrt{q}\log\log q} & \text{ if } \chi \text{ is odd }(\text{that is }\chi(-1)=-1),\\
 \displaystyle{\left(\frac{2e^{\gamma}}{\pi\sqrt{3}}+o(1)\right)\sqrt{q}\log\log q} & \text{ if } \chi \text{ is even }(\text{that is }\chi(-1)=1).
\end{cases}
\end{equation}
Similarly to the case of $L(1, \chi)$,  Granville and Soundararajan \cite{GrSo3} conjecture that the GRH bounds \eqref{GRHCharSum} are off by a factor of $2$. Namely that, 
\begin{Con}\label{ConjectureCharSum} Let $q$ be large, and $\chi$ be a primitive character $\pmod q$. Then
\begin{equation}\label{ConjecturedCharSum}
M(\chi)\leq  \begin{cases}  \displaystyle{\left(\frac{e^{\gamma}}{\pi}+o(1)\right)\sqrt{q}\log\log q} & \text{ if } \chi \text{ is odd, }\\
 \displaystyle{\left(\frac{e^{\gamma}}{\pi\sqrt{3}}+o(1)\right)\sqrt{q}\log\log q} & \text{ if } \chi \text{ is even}.
\end{cases}
\end{equation}
\end{Con}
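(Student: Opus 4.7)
The conjectured bound \eqref{ConjecturedCharSum} represents a hoped-for factor-of-$2$ improvement over the GRH bound \eqref{GRHCharSum} of Granville and Soundararajan, and is presently out of reach. My plan is to reduce it, at least in spirit, to the upper bound in Conjecture \ref{ConjectureL1}, and to explain why that reduction, while natural, runs into a genuine obstacle. The starting point is P\'olya's Fourier expansion: for primitive $\chi\pmod q$ and $N\leq q$,
$$\sum_{n\leq t}\chi(n)=\frac{\tau(\chi)}{2\pi i}\sum_{1\leq |n|\leq N}\frac{\bar\chi(n)(1-e(-nt/q))}{n}+O\!\left(\frac{q\log q}{N}\right),$$
which upon taking absolute values and using $|\tau(\chi)|=\sqrt q$ yields
$$M(\chi)\leq \frac{\sqrt{q}}{\pi}\sup_{\alpha\in[0,1]}\left|\sum_{n=1}^{\infty}\frac{\bar\chi(n)\sin(\pi n\alpha)}{n}\right|+O(\log q).$$
Granville and Soundararajan bound the right side by treating $|\sin|\leq 1$ trivially and reducing to partial Euler products for $L(1,\bar\chi)$ on GRH; this is precisely where the constant $2e^\gamma$ enters. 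The plan is to keep the sine factor in play, treating it as part of a finite average of L-values rather than discarding it.

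Assuming the supremum is essentially attained at, or well approximated by, a rational $\alpha=a/r$ with $r$ small, I would exploit the identity
$$\sum_{n\geq 1}\frac{\bar\chi(n)\,e(na/r)}{n}=\frac{1}{\phi(r^*)}\sum_{\psi\pmod{r^*}}\bar\psi(a)\,\tau(\psi)\,L(1,\bar\chi\psi)\;+\;\text{lower order terms,}$$
valid up to divisibility issues between $q$ and $r$, to rewrite the P\'olya sum as a short linear combination of twisted L-values $L(1,\bar\chi\psi)$. Conditional on the upper bound $|L(1,\bar\chi\psi)|\leq (e^{\gamma}+o(1))\log\log q$ predicted by Conjecture \ref{ConjectureL1}, this yields the target $\frac{e^{\gamma}}{\pi}\sqrt q\log\log q$ in the odd case. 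In the even case, $\chi(-1)=1$ forces the dominant contribution to come from $\alpha=a/r$ with $r$ even (since the sum over all $n$ vanishes to leading order in $\alpha$ by parity), and isolating this contribution via $\chi(2)$ and the Euler factor at $2$ is what produces the extra $1/\sqrt 3$ in the even constant; this matches the Mertens-type constant that governs the lower-bound constructions of the present paper.

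The main obstacle is that Conjecture \ref{ConjectureL1} itself is wide open, and exhibits exactly the same factor-of-$2$ gap between the conjectured truth $e^{\gamma}\log\log q$ and the known GRH bound $2e^{\gamma}\log\log q$; closing this gap is the principal unresolved problem in the whole circle of questions and cannot be avoided by the L-value approach. A second, more technical, obstacle is controlling the supremum over irrational $\alpha$: one needs a uniform bound on the shifted object $\sum_n \chi(n)e(n\alpha)/n$ jointly in $\chi$ and $\alpha$ which rules out anomalously large values at poorly-approximable $\alpha$. This amounts to a joint distribution result for $\log L(1,\chi)$ on translates, which is strictly stronger than the pointwise statement in Conjecture \ref{ConjectureL1} and appears to require substantial new input, plausibly an extension of the Granville--Soundararajan probabilistic model for $L(1,\chi)$ to include correlations across Dirichlet twists.
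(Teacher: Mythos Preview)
The statement you are addressing is a \emph{conjecture}, not a theorem, and the paper does not prove it. The paper simply records it as Conjecture~\ref{ConjectureCharSum}, attributes it to Granville and Soundararajan \cite{GrSo3}, and cites \cite{BGGK} for distributional evidence in its favor; no proof or heuristic derivation is offered in the paper itself. You recognize this clearly at the outset (``presently out of reach''), so in that sense your proposal and the paper are in agreement: neither claims a proof.

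Your heuristic reduction to Conjecture~\ref{ConjectureL1} via P\'olya's Fourier expansion and the decomposition of $\sum_n \chi(n)e(n\alpha)/n$ into twisted $L$-values is a reasonable outline of why the two conjectures are expected to stand or fall together, and it is essentially the mechanism behind the GRH bounds \eqref{GRHCharSum} in \cite{GrSo3}. One small correction: the factor $1/\sqrt{3}$ in the even case does not arise from the Euler factor at $2$ as you suggest, but from the twist by the Legendre symbol $\left(\frac{\cdot}{3}\right)$; the relevant extremal point $\alpha$ is near $1/3$, and the conductor $3$ is what produces the $\sqrt{3}$ (compare \eqref{BoundMChi} in the paper). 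Beyond that, your identification of the two obstacles---the open factor-of-$2$ gap in Conjecture~\ref{ConjectureL1}, and the need for uniformity in $\alpha$---is accurate, and nothing in the paper addresses either of them.
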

In a recent work,  Bober, Goldmakher, Granville and Koukoulopoulos \cite{BGGK} studied the distribution of large values of $M(\chi)$ as $\chi$ varies over non-principal primitive characters modulo $q$, where $q$ is a large prime. In particular, their results give strong support to Conjecture \ref{ConjectureCharSum}. 

In the other direction, Bateman and Chowla \cite{BaCh} improved Paley's result \eqref{Paley},  by establishing the existence of infinitely many $q$ and odd quadratic characters $\chi\pmod{q}$ such that 
$$
M(\chi)\geq \left(\frac{e^{\gamma}}{\pi}+o(1)\right)\sqrt{q}\log\log q.
$$
In view of Conjecture \ref{ConjectureCharSum}, this bound is likely to be best possible.  When $k\geq 4$ is an even integer, Goldmakher and the author \cite{GoLa} extended Paley's construction to characters of order $k$. More precisely, they showed that there are infinitely many $q$, and even characters $\chi\pmod q$ of order $k$ such that 
$$ M(\chi)\geq \left(\frac{1}{\pi\sqrt{p_k}}+o(1)\right)\sqrt{q}\log\log q,$$
where $p_k$ is the smallest prime such that $p_k\equiv k+1\pmod{2k}$. Since $p_k>k$,  this bound decreases as the order increases.  Bober \cite{Bo} subsequently obtained the same result using a different approach. As Bober notes that, this leaves open the possibility that for any $\epsilon>0$ there exist (large) even $k$ such that all
characters of order $k$ satisfy $M(\chi)\leq  \epsilon \sqrt{q}\log\log q$. We resolve this matter, by exhibiting values of $M(\chi)$ 
for even and odd characters $\chi$ of a fixed even order $k$, that are as large as the conjectured bounds \eqref{ConjecturedCharSum}. 

\begin{thm}\label{EvenOrdCharSum} Let $k\geq 2$ be a fixed even integer. There exist arbitrarily large $q$ and odd primitive characters $\chi \pmod q$ of order $k$, such that 
$$
M(\chi)\geq \left(\frac{e^{\gamma}}{\pi}+o(1)\right)\sqrt{q}\log\log q.
$$
Moreover,  there exist arbitrarily large $q$ and even primitive characters $\chi \pmod q$ of order $k$, such that 
$$
M(\chi)\geq \left(\frac{e^{\gamma}}{\pi\sqrt{3}}+o(1)\right)\sqrt{q}\log\log q.
$$
\end{thm}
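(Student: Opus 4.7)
The plan is to combine P\'olya's Fourier expansion with the announced variants of Theorem \ref{AllOrders} (extremal values of $L(1,\chi)$ and of twisted values $L(1,\chi\xi)$ as $\chi$ ranges over even or odd characters of order $k$). In each case the idea is to evaluate $\sum_{n\leq x}\chi(n)$ at a well-chosen $x$, so that P\'olya's identity expresses this sum as an explicit constant multiple of $\sqrt{q}$ times such an $L$-value; the game is to choose $x$ so that the multiplier and the available lower bound on the $L$-value together recover the target constants $e^{\gamma}/\pi$ and $e^{\gamma}/(\pi\sqrt{3})$.

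In the odd case I would work at $x=q/2$. Pairing $n$ with $-n$ in P\'olya's series, using $\bar\chi(-n)=-\bar\chi(n)$, and noting that $1-\cos(\pi n)$ vanishes on even $n$, yields
\begin{equation*}
\sum_{n\leq q/2}\chi(n)=\frac{2\tau(\chi)}{\pi i}\Bigl(1-\frac{\bar\chi(2)}{2}\Bigr)L(1,\bar\chi)+O(\log q).
\end{equation*}
Since $|\tau(\chi)|=\sqrt{q}$, arranging $\chi(2)=1$ gives $M(\chi)\geq(\sqrt{q}/\pi)|L(1,\chi)|+O(\sqrt{q})$. The odd-character version of Theorem \ref{AllOrders} promised in the abstract will produce $\chi$ with $\chi(p)=1$ for every prime $p$ up to a slowly growing parameter $y$ (in particular $\chi(2)=1$), and Mertens' theorem then gives $|L(1,\chi)|\geq(e^{\gamma}+o(1))\log y$; taking $\log y\sim\log\log q$ yields the first bound.

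In the even case I would work at $x=q/3$. For even $\chi$, the pairing turns P\'olya's series into a sine series, and the elementary identity $\sin(2\pi n/3)=(\sqrt{3}/2)\chi_{-3}(n)$, where $\chi_{-3}$ is the non-principal (odd, quadratic) character modulo $3$, collapses the result to
\begin{equation*}
\sum_{n\leq q/3}\chi(n)=\frac{\sqrt{3}\,\tau(\chi)}{2\pi}L(1,\bar\chi\chi_{-3})+O(\log q),
\end{equation*}
so that $M(\chi)\geq(\sqrt{3}\sqrt{q}/(2\pi))|L(1,\chi\chi_{-3})|$. I would then invoke the variant of Theorem \ref{AllOrders} for the twisted value $L(1,\chi\xi)$ with $\xi=\chi_{-3}$, restricted to even $\chi$ of order $k$. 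It will produce $\chi$ with $\chi(p)=\chi_{-3}(p)$ at every prime $p\leq y$ with $p\neq 3$, so the Euler product of $\chi\chi_{-3}$ contributes $(1-1/p)^{-1}$ at each such prime and $1$ at $p=3$; Mertens then gives $|L(1,\chi\chi_{-3})|\geq(2e^{\gamma}/3+o(1))\log\log q$, the factor $2/3$ being the lost Euler factor at $3$. Combining yields the claimed $e^{\gamma}/(\pi\sqrt{3})$.

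The main obstacle is not the P\'olya--Fourier step, which is standard once the right target has been identified, but rather the construction of the underlying characters: one must find primitive characters of order $k$ with prescribed parity whose values at every prime up to $y$ agree with a given template ($\chi(p)=1$ in the odd case, $\chi(p)=\chi_{-3}(p)$ in the even case). This is precisely the content of the parity-restricted and twisted variants of Theorem \ref{AllOrders} announced in the abstract, presumably proved by a combination of arguments over the cyclotomic field $\mathbb{Q}(\zeta_k)$ with Chinese Remainder adjustments to enforce the correct values at small primes. Once that is in hand, the two displayed identities give Theorem \ref{EvenOrdCharSum} after choosing $y$ with $\log y\sim\log\log q$.
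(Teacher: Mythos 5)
Your proposal matches the paper's proof: both relate $M(\chi)$ to $|L(1,\chi)|$ for odd $\chi$ (via the exact evaluation of $\sum_{n\leq q/2}\chi(n)$, Theorem 9.21 of \cite{MoVa3}) and to $\left|L\left(1,\chi\left(\frac{\cdot}{3}\right)\right)\right|$ for even $\chi$ (via the $x=q/3$ P\'olya sum, quoted from \cite{BGGK}), and then both invoke the parity- and twist-restricted extreme-value statement, namely Theorem \ref{EvenOrdersFixedSign}, with $\xi$ principal in the odd case and $\xi=\left(\frac{\cdot}{3}\right)$ in the even case, so that the factor $\varphi(3)/3=2/3$ combines with $\sqrt{3}/2$ to give $e^{\gamma}/(\pi\sqrt{3})$ exactly as you compute. (Your speculation that the underlying characters are built by CRT adjustments over $\mathbb{Q}(\zeta_k)$ is not how the paper proves Theorem \ref{EvenOrdersFixedSign} --- it twists a single order-$k$ character by quadratic characters $\chi_d$ --- but that theorem is a separate input and your deduction from it is the same as the paper's.)
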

In order to establish this result, we relate $M(\chi)$ to $L(1,\chi)$ via the following bounds, which are valid if the order of $\chi$ is even
\begin{equation}\label{BoundMChi}
M(\chi)\geq 
\begin{cases} \displaystyle{\frac{\sqrt{q}}{\pi}|L(1,\chi)|}& \text{ if } \chi \text{ is odd, and has even order},\\
\displaystyle{\frac{\sqrt{3q}}{2\pi}\left|L\left(1,\chi\left(\frac{\cdot}{3}\right)\right)\right|} & \text{ if } \chi \text{ is even, and has even order},\\
\end{cases}
\end{equation}
where $\left(\frac{\cdot}{3}\right)$ is the Legendre symbol modulo $3$. When $\chi$ is even, this bound is proved in Section 4 of \cite{BGGK}. When $\chi$ is odd, the corresponding bound follows from the pointwise estimate (see for example Theorem 9.21 of \cite{MoVa3})
$$\sum_{n\leq q/2}\chi(n)=(2-\chi(2))\frac{\tau(\chi)}{i\pi} \overline{L(1, \chi)}, $$
where $\tau(\chi)$ is the Gauss sum associated to $\chi$, which satisfies $|\tau(\chi)|=\sqrt{q}$. 

Now to obtain Theorem \ref{EvenOrdCharSum}, it remains to produce large values of $|L(1,\chi)|$ for odd primitive characters $\chi$ of even order $k$, as well as large values of $|L\left(1,\chi\left(\frac{\cdot}{3}\right)\right)|$ for even primitive characters $\chi$ of order $k$.   Unfortunately, our construction in Theorem \ref{AllOrders} does not allow us to restrict to odd or even characters, or to twist by  $\left(\frac{\cdot}{3}\right)$. Instead, we use a different construction based on twisting a family of quadratic characters by a single specific character of order $k$, so that these twists also have order $k$, since $k$ is even.  We prove 
\begin{thm}\label{EvenOrdersFixedSign}
Let $k\geq 2$ be a fixed even integer and $\delta\in \{1, -1\}$. Let $\xi$ be a primitive real character  of conductor $\ell$. Then, there are at least $Q^{1/3+o(1)}$ primitive characters $\chi$ of order $k$ and conductor $q\leq Q$ such that $\chi(-1)=\delta$ and
$$|L(1, \chi \xi )| \geq \frac{\varphi(\ell)}{\ell} e^{\gamma}\log\log Q +O_{\ell}(1).$$
\end{thm}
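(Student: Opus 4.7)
The strategy is to write $\chi = \chi_d\psi$, where $\psi$ is a specially chosen primitive character of order $k$ (which will depend on $Q$) and $\chi_d$ ranges over a sieved family of quadratic characters. Since $k$ is even, for $d$ coprime to the conductor $q_0$ of $\psi$ the product $\chi_d\psi$ is primitive of order $\operatorname{lcm}(k,2) = k$, and the feature I will engineer is that $\chi\xi(p) = \chi_d(p)\psi(p)\xi(p) = 1$ for every prime $p\leq y$ coprime to $q_0\ell d$, where $y = A\log Q$ for a small constant $A>0$ fixed below. Once this is arranged, Mertens' theorem gives
\[
\prod_{p\leq y}\bigl|1-\chi\xi(p)/p\bigr|^{-1} \;=\; \frac{\varphi(\ell)}{\ell}\,e^{\gamma}\log y + O_\ell(1) \;=\; \frac{\varphi(\ell)}{\ell}\,e^{\gamma}\log\log Q + O_\ell(1),
\]
after discarding the vanishing Euler factors at the primes dividing $q_0\ell d$.

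To construct $\psi$, take $q_0$ to be a prime in a prescribed class modulo $2k\ell$ so that the character $\psi$ of order $k$ mod $q_0$ has the correct parity (compensating $\chi_d(-1)$ to give $\chi(-1)=\delta$), and with the additional property that every prime $p\leq y$ with $p\nmid\ell$ is a $(k/2)$-th power residue modulo $q_0$. This latter condition is equivalent to $\psi(p)^2 = 1$, hence $\psi(p)\in\{\pm 1\}$, so that the real value $\psi(p)\xi(p)\in\{\pm 1\}$ can later be prescribed as $\chi_d(p)$. By $(k/2)$-th power reciprocity in $\mathbb{Q}(\zeta_{k/2})$ and the Chinese remainder theorem, these requirements place $q_0$ in a specified residue class modulo an integer $M$ with $\log M = (1+o(1))y$, and by Kummer-theoretic independence a proportion $\gg_k (2/k)^{\pi(y)}$ of classes mod $M$ are admissible. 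A Linnik-type theorem (unconditionally Xylouris, with absolute exponent $L$) then furnishes such a prime $q_0 \leq M^L \leq Q^{AL+o(1)}$.

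With $\psi$ in hand, sieve the fundamental discriminants $d$ with $|d|\leq Q/q_0$, $\gcd(d, q_0\ell)=1$, the sign of $d$ chosen so that $\chi_d(-1)\psi(-1)=\delta$, and $\chi_d(p) = \psi(p)\xi(p)$ for every prime $p\leq y$ coprime to $q_0\ell$. Each of these $\pi(y)(1+o(1))$ conditions is, by quadratic reciprocity, a congruence on $d$ of density $1/2$, so a standard sieve for fundamental discriminants in congruence classes produces at least $(Q/q_0)\cdot 2^{-\pi(y)} = Q^{1-AL-o(1)}$ valid $d$. Choosing $A = 2/(3L)$ optimally balances the two constraints and yields at least $Q^{1/3+o(1)}$ admissible $d$, hence that many distinct primitive $\chi = \chi_d\psi$ of order $k$, conductor $q_0|d|\leq Q$, and $\chi(-1)=\delta$. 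For each such $\chi$ the engineered identity $\chi\xi(p)=1$ at the relevant $p\leq y$ combined with the Mertens estimate above proves $|L(1,\chi\xi)|\geq (\varphi(\ell)/\ell)\,e^{\gamma}\log\log Q + O_\ell(1)$, provided I control both the Euler factors at primes dividing $d$ (absorbed into $O_\ell(1)$ for a positive proportion of the $d$, or avoided by also requiring $d$ to be free of primes below a slowly growing threshold) and the tail $\prod_{p>y}(1-\chi\xi(p)/p)^{-1}$ (handled by a second-moment/large-sieve computation over our family, showing that the tail equals $1+o(1)$ for all but $o(Q^{1/3+o(1)})$ of the constructed $\chi$).

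The principal obstacle is exactly this simultaneous construction: imposing $\psi(p)\in\{\pm 1\}$ for every $p\leq y$ forces $q_0$ to satisfy $\pi(y)$ independent $(k/2)$-th power residue conditions, and by Linnik this pushes $q_0$ up to size $Q^{AL}$ for a positive exponent $AL$; together with the factor $2^{\pi(y)}$ lost to the quadratic conditions on $d$, this trade-off is precisely what fixes the count of admissible $\chi$ at $Q^{1/3+o(1)}$ rather than something larger, and pins the attainable size of $y$ to $A\log Q$ with $\log y = \log\log Q + O_k(1)$.
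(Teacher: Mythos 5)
Your high-level strategy — fix an order-$k$ character $\psi$ with controlled Euler factors at small primes, twist by a sieved family of quadratic characters $\chi_d$, and match $\chi_d(p)$ to $\overline{\psi(p)\xi(p)}$ for small $p$ — is the same as the paper's. But the mechanism by which you produce $\psi$ does not work for general even $k$, and this is where the proof breaks.

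You claim that requiring every prime $p\leq y$ (with $p\nmid\ell$) to be a $(k/2)$-th power residue modulo $q_0$, via ``$(k/2)$-th power reciprocity in $\mathbb{Q}(\zeta_{k/2})$,'' pins $q_0$ to a residue class modulo an explicit integer $M$, so that Linnik gives a suitable prime $q_0\leq M^L$. This is false as soon as $k\geq 6$. For quadratic residues ($k=4$) the symbol $(\tfrac{p}{q_0})$ is indeed periodic in $q_0$ modulo $4p$, which is what makes Proposition \ref{FirstPRIMES} in the paper work for the $d$-variable. But the $(k/2)$-th power residue symbol for $k/2\geq 3$ is not a function of $q_0$ modulo any fixed integer: it depends on the choice of a prime $\pi$ of $\mathbb{Q}(\zeta_{k/2})$ above $q_0$, and the reciprocity law relates $(p/\pi)_{k/2}$ to $(\pi/p)_{k/2}$, which is not determined by a congruence on $q_0\in\mathbb{Z}$. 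The condition is really a Chebotarev condition in the Kummer extension $\mathbb{Q}(\zeta_{k/2}, p^{2/k}:\, p\leq y)$, whose degree grows exponentially in $\pi(y)\asymp \log Q/\log\log Q$; an effective Chebotarev theorem there is not equivalent to Linnik and is nowhere near as clean as you suggest. This is precisely the obstacle alluded to in the introduction (``higher reciprocity laws for $k$-th order symbols are not easy to apply''), and your proof inherits it.

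The paper evades the issue entirely by \emph{not} imposing power-residue conditions on a single prime conductor. Instead it takes $\psi = \psi_{q_1}\overline{\psi_{q_2}}$ with $q_1,q_2$ prime and uses a pigeonhole argument (Lemma \ref{Pigeonhole}): among the $\asymp\sqrt{Q}/\log Q$ primes $q\in(Q^{1/3},2Q^{1/3})$ with $q\equiv 1\pmod k$, the tuple $(\psi_q(p))_{p\leq\log Q}$ takes at most $k^{\pi(\log Q)}=Q^{o(1)}$ values, so two primes $q_1\neq q_2$ share the same tuple and then $\psi_{q_1}\overline{\psi_{q_2}}(p)=1$ for all $p\leq\log Q$. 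This gives $\psi(p)=1$ (not merely $\psi(p)\in\{\pm1\}$) at small primes, which is also cleaner because one then only needs $\chi_d(p)=\xi(p)$, dispensing with the sign-matching. The rest of your outline — sieving fundamental discriminants via quadratic-reciprocity congruences (the paper's Proposition \ref{FirstPRIMES} plus Lemma \ref{BoundNonSquare}), controlling the tail $\sum_{y<p<z}\psi\xi\chi_d(p)/p$ by a second moment (the paper's Lemma \ref{LargeSieveQuad}), and truncating $L(1,\cdot)$ by Proposition \ref{GranvilleSound} — does match the paper, but the construction of $\psi$ as you describe it would need to be replaced by the pigeonhole step (or by a correct effective Chebotarev argument, which is a substantially harder route) to close the argument.
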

Using the bounds \eqref{BoundMChi}, we deduce Theorem \ref{EvenOrdCharSum} from Theorem \ref{EvenOrdersFixedSign} by taking $\xi$ to be the principal character if $\chi$ is odd, and $\xi= \left(\frac{\cdot}{3}\right)$ if $\chi$ is even.

%%%%%%%%%%%%%%%%%%%%%%%%%%%%%%%%%%%%%%%%%%%%%

\section{Large values of $|L(1, \chi)|$ for characters $\chi$ of order $k$: Proof of Theorem \ref{AllOrders}}
Our construction in Theorem \ref{AllOrders} relies on primitive characters $\chi$ of prime conductor $q$. In this case, if $\chi$ has order $k$ then $q\equiv 1 \pmod k$.  We first prove that there are exactly $\varphi(k)$ such characters modulo $q$. Here and throughout we let $\zeta_k:= \exp\left(\frac{2\pi i}{k}\right)$. 
\begin{lem}\label{Construction1}
Let $k\geq 2$ be an integer. For any prime $q\equiv 1\pmod k$, there are exactly $\varphi(k)$ primitive characters of order $k$ and conductor $q$. 
\end{lem}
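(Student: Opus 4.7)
The plan is to reduce the counting to a standard fact about cyclic groups. Since $q$ is prime, the multiplicative group $(\mathbb{Z}/q\mathbb{Z})^{*}$ is cyclic of order $q-1$, and hence its dual, the group of Dirichlet characters modulo $q$, is also cyclic of order $q-1$. The hypothesis $q\equiv 1\pmod k$ gives $k\mid q-1$, so I may invoke the elementary fact that in a cyclic group of order $n$, the number of elements of order exactly $d$ equals $\varphi(d)$ for every divisor $d$ of $n$. Applied with $d=k$ and $n=q-1$, this yields exactly $\varphi(k)$ characters modulo $q$ of order exactly $k$.

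It remains to check primitivity. Because $q$ is prime, the only proper divisor of the modulus is $1$, so the only character modulo $q$ induced from a strictly smaller conductor is the principal character, which is the unique character of order $1$. Any character of order $k\ge 2$ is therefore non-principal and hence automatically primitive.

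There is essentially no obstacle: the lemma is a bookkeeping statement that combines the standard cyclic-group count with the triviality of imprimitivity issues at a prime modulus. Its role in the paper appears to be to fix, once and for all, the number of candidate characters of order $k$ available at each prime conductor $q\equiv 1\pmod k$, so that the probabilistic/random-model arguments used to prove Theorem~\ref{AllOrders} can be set up cleanly over primes. If one wanted to make the count more concrete, one could fix a generator $g$ of $(\mathbb{Z}/q\mathbb{Z})^{*}$, note that a character is determined by $\chi(g)=\zeta_{q-1}^{j}$ for some $0\le j<q-1$, and observe that $\chi$ has order exactly $k$ iff $\gcd(j,q-1)=(q-1)/k$; the number of such $j$ in $\{0,1,\dots,q-1\}$ is $\varphi(k)$.
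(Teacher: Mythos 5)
Your proof is correct and, at bottom, the same as the paper's: both rest on the character group modulo a prime $q$ being cyclic of order $q-1$, so that the number of order-$k$ characters is $\varphi(k)$ once $k\mid q-1$. The paper phrases this concretely by fixing a primitive root $g$ and counting the $\alpha$ with $(\alpha,k)=1$ for which $\chi(g)=\zeta_k^\alpha$, which is exactly the ``more concrete'' variant you sketch at the end; you additionally spell out the (trivial) primitivity check, which the paper leaves implicit.
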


\begin{proof} 
Let $g$ be a primitive root modulo $q$. Since any primitive character is completely determined by its value at $g$, we deduce that if $\chi$ is primitive and 
\begin{equation}\label{PropCharOrderk}
\chi(g)=(\zeta_k)^{\alpha}, \text{ for some } (\alpha, k)=1,
\end{equation}
then $\chi$ has order $k$. Moreover, any character $\chi$ of order $k$ and conductor $q$ has to satisfy \eqref{PropCharOrderk}. Finally, note that there are exactly $\varphi(k)$ such characters.
\end{proof}
For any prime $q\equiv 1 \pmod k$, we let $g_q$ be the smallest primitive root modulo $q$. We define $\psi_q$ to be the primitive character modulo $q$ such that 
\begin{equation}\label{DefCharRoot}
\psi_q(g_q)= \zeta_k.
\end{equation}
Note that $\psi_q$ has order $k$. Using these characters, we are going to construct a family of primitive characters of order $k$, which shall be used in the proof of Theorem \ref{AllOrders}.

\begin{lem}\label{character}
 Let $k\geq 2$ be an integer, and  $m=q_1q_2$ where $q_1\neq  q_2$ are primes such that $q_1\equiv q_2\equiv 1 \pmod k$. Then  
 $$
 \widetilde{\psi}_m:=\psi_{q_1}\overline{\psi_{q_2}}
 $$ is a primitive character of order $k$ and conductor $m$. 
\end{lem}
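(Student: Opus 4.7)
The plan is to decompose $\widetilde{\psi}_m$ via the Chinese Remainder Theorem and verify each of the three claims (order divides $k$, order equals $k$, and conductor equals $m$) separately.

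First, since $q_1 \neq q_2$ are distinct primes, $\gcd(q_1,q_2)=1$, so CRT identifies $(\mathbb{Z}/m\mathbb{Z})^{\times}$ with $(\mathbb{Z}/q_1\mathbb{Z})^{\times} \times (\mathbb{Z}/q_2\mathbb{Z})^{\times}$. Under this identification, for $a$ coprime to $m$, one has $\widetilde{\psi}_m(a)=\psi_{q_1}(a_1)\,\overline{\psi_{q_2}(a_2)}$, where $a_i$ denotes the reduction of $a$ modulo $q_i$. This makes $\widetilde{\psi}_m$ a well-defined character modulo $m$.

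Next I would verify that $\widetilde{\psi}_m$ has order exactly $k$. Both $\psi_{q_1}$ and $\psi_{q_2}$ take values in the group $\mu_k$ of $k$-th roots of unity, so the same holds for $\widetilde{\psi}_m$; hence its order divides $k$. To see the order is exactly $k$, use CRT to choose $a$ with $a \equiv g_{q_1}\pmod{q_1}$ and $a\equiv 1\pmod{q_2}$. Then
$$\widetilde{\psi}_m(a)=\psi_{q_1}(g_{q_1})\,\overline{\psi_{q_2}(1)}=\zeta_k,$$
by the definition \eqref{DefCharRoot}, and $\zeta_k$ has order exactly $k$ in $\mathbb{C}^{\times}$. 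Thus the order of $\widetilde{\psi}_m$ is at least $k$, hence equal to $k$.

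Finally, for primitivity (equivalently, that the conductor equals $m$), I would show that $\widetilde{\psi}_m$ cannot be induced from a character modulo a proper divisor of $m$, i.e.\ modulo $1$, $q_1$, or $q_2$. If the conductor were $q_1$, then $\widetilde{\psi}_m(a)=1$ for every $a$ coprime to $m$ with $a\equiv 1\pmod{q_1}$. But picking $a$ with $a\equiv 1\pmod{q_1}$ and $a\equiv g_{q_2}\pmod{q_2}$ gives $\widetilde{\psi}_m(a)=\overline{\psi_{q_2}(g_{q_2})}=\zeta_k^{-1}\neq 1$, a contradiction. Symmetrically, if the conductor were $q_2$, one finds an $a$ with $a\equiv g_{q_1}\pmod{q_1}$ and $a\equiv 1\pmod{q_2}$ contradicting triviality on $a\equiv 1\pmod{q_2}$. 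The case of conductor $1$ is ruled out since $\widetilde{\psi}_m$ is not the principal character (it has order $k\geq 2$). Hence the conductor equals $m$.

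No serious obstacle is expected; the only point requiring care is the primitivity verification, which hinges on the fact that $\psi_{q_1}$ and $\psi_{q_2}$ (hence also $\overline{\psi_{q_2}}$) are each primitive of prime modulus, so their values at a primitive root at one prime while being trivial at the other can be exploited directly via CRT.
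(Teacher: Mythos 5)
Your proof is correct and follows essentially the same approach as the paper: both establish that the order divides $k$ because $\psi_{q_1}$ and $\psi_{q_2}$ do, and both pin down the order as exactly $k$ by evaluating $\widetilde{\psi}_m$ at a CRT-constructed element congruent to a primitive root modulo $q_1$ and to $1$ modulo $q_2$ (the paper writes this element explicitly as $n=aq_2+1$). The one difference is that the paper simply cites the standard fact that a product of primitive characters with coprime moduli is primitive of the product conductor, whereas you verify primitivity from scratch by ruling out conductors $1$, $q_1$, $q_2$ via CRT; this makes your argument slightly more self-contained but is not a different method.
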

\begin{proof}
Since $q_1$ and $q_2$ are coprime, then $\widetilde{\psi}_m$ is primitive and has conductor $m$.  Moreover, $\psi_{q_1}$ and $\psi_{q_2}$ have order $k$ and hence the order of $\widetilde{\psi}_m$ divides $k$. Therefore, to show that $\widetilde{\psi}_m$ has order $k$, it suffices to find a integer $n$ such that 
$\widetilde{\psi}_m(n)=\zeta_{k}$. Now, let $g$ be the smallest primitive root modulo $q_1$, and $a$ be a solution to the following linear congruence
$$
aq_2\equiv g-1\pmod {q_1}.
$$
 Letting $n=aq_2+1$ and using \eqref{DefCharRoot} we derive
$$\widetilde{\psi}_m(n)=\psi_{q_1}(g)\overline{\psi_{q_2}(aq_2+1)}= \zeta_k,$$
as desired.
\end{proof}
Let $\mathcal{F}_k(Q)$ be the set of characters $\widetilde{\psi}_m$ indexed over the  integers $m=q_1q_2$, where $q_1, q_2$ are primes such that $\sqrt{Q}<q_1<q_2<2\sqrt{Q}$ and $q_1\equiv q_2\equiv 1\pmod k$. 
Then, it follows from the prime number theorem in arithmetic progressions that 
$$|\mathcal{F}_k(Q)|\asymp_k \frac{Q}{(\log Q)^2}.$$

In order to obtain large values of $L(1, \chi)$, a general strategy is to construct characters $\chi$ 
such that $\chi(p)=1$ for all the small primes $p$, typically up to the logarithm of the conductor of $\chi$. 
Using a judicious application of the pigeonhole principle we prove that there exist many characters $\widetilde{\psi}_m$ in $\mathcal{F}_k(Q)$ with this property.
\begin{lem}\label{Pigeonhole} Let $k\geq 2$ be a fixed integer. There exists a  constant $c_k>0$ (depending only on $k$)  for which there 
$$
\gg Q\exp\left(-c_k\frac{\log Q}{\log\log Q}\right)
$$ characters  $\widetilde{\psi}_m\in \mathcal{F}_k(Q)$ such that 
$\widetilde{\psi}_m(p)=1$ for all primes $p \leq \log Q.$
\end{lem}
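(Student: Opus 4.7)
The plan is to carry out a direct pigeonhole argument on the vectors of character values at small primes. Let $\mathcal{P}_k(Q)$ denote the set of primes $q$ with $\sqrt Q<q<2\sqrt Q$ and $q\equiv 1\pmod k$. By the prime number theorem in arithmetic progressions,
$$|\mathcal{P}_k(Q)|\gg_k \frac{\sqrt Q}{\log Q}.$$
For each $q\in\mathcal{P}_k(Q)$ and each prime $p\leq \log Q$, we have $p\neq q$ (since $q>\sqrt Q$), so $\psi_q(p)$ is a well-defined $k$-th root of unity. I would then consider the map
$$\Phi: \mathcal{P}_k(Q)\longrightarrow \mu_k^{\pi(\log Q)},\qquad \Phi(q)=\bigl(\psi_q(p)\bigr)_{p\leq \log Q},$$
where $\mu_k$ is the group of $k$-th roots of unity.

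The codomain has cardinality $k^{\pi(\log Q)}$. Using the crude bound $\pi(\log Q)\ll \log Q/\log\log Q$, this is $\exp(O_k(\log Q/\log\log Q))$. By the pigeonhole principle, there exists a vector $\mathbf v$ whose fiber $S:=\Phi^{-1}(\mathbf v)$ satisfies
$$N:=|S|\;\geq\; \frac{|\mathcal{P}_k(Q)|}{k^{\pi(\log Q)}}\;\gg_k\; \sqrt Q\,\exp\!\left(-c_k'\frac{\log Q}{\log\log Q}\right)$$
for a suitable constant $c_k'>0$.

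Now the point is that for any two distinct primes $q_1,q_2\in S$ with $q_1<q_2$, every prime $p\leq \log Q$ satisfies $\psi_{q_1}(p)=\psi_{q_2}(p)$ by construction, so by Lemma \ref{character} the character $\widetilde{\psi}_m=\psi_{q_1}\overline{\psi_{q_2}}$ (with $m=q_1q_2$) lies in $\mathcal{F}_k(Q)$ and satisfies
$$\widetilde{\psi}_m(p)=\psi_{q_1}(p)\overline{\psi_{q_2}(p)}=1\quad\text{for all primes }p\leq \log Q.$$
Distinct ordered pairs $(q_1,q_2)$ with $q_1<q_2$ give distinct values of $m=q_1q_2$ and hence distinct characters $\widetilde\psi_m$. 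Therefore the number of characters in $\mathcal{F}_k(Q)$ with the required property is at least
$$\binom{N}{2}\;\gg\; Q\,\exp\!\left(-c_k\frac{\log Q}{\log\log Q}\right)$$
for an appropriate $c_k>0$, which is the desired bound.

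There is no real obstacle: the argument is essentially book-keeping. The only small point to check carefully is that pairs $(q_1,q_2)$ really do produce distinct members of $\mathcal{F}_k(Q)$, which is immediate since the conductor $m=q_1q_2$ determines the unordered pair of prime factors. The constant $c_k$ depends on $k$ only through the factor $\log k$ coming from $k^{\pi(\log Q)}$ and the implicit constant in the prime number theorem for the progression $1\pmod k$.
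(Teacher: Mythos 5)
Your argument is exactly the paper's: both partition the primes $q\in(\sqrt Q,2\sqrt Q)$ with $q\equiv 1\pmod k$ according to the vector $(\psi_q(p))_{p\le\log Q}$, apply the pigeonhole principle to find a large fiber, and then count pairs $q_1<q_2$ in that fiber, yielding $\binom{N}{2}$ characters $\widetilde\psi_m$ with the required property. The presentation as a map $\Phi$ into $\mu_k^{\pi(\log Q)}$ is merely a cosmetic repackaging of the paper's sets $S_Q(\mathbf v)$.
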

\begin{proof} 
Let $p_j$ denote the $j$-th prime and $p_\ell$ be the largest prime below $\log Q$.  Note that $\ell=\pi(\log Q)\sim \log Q/\log\log Q$. Let 
$$
U_k= \{\zeta_k^a: 0\leq a\leq k-1\}
$$ be the set of $k$-th roots of unity, and $$\mathcal{A}= \{\mathbf{v}=(v_1, \dots, v_{\ell}) \text{ such that } v_j\in U_k \text{ for all } 1\leq j\leq \ell\}.$$
Note that $|\mathcal{A}|= k^{\ell}.$
For any $\mathbf{v}\in \mathcal{A}$ we define $S_Q(\mathbf{v})$ to be the set of primes $q$ in $(\sqrt{Q}, 2\sqrt{Q})$ such that $q\equiv 1 \bmod k$ and  $\psi_q(p_j)= v_j$  for all $1\leq j\leq \ell$. By the prime number theorem in arithmetic progressions we have 
$$
\sum_{\mathbf{v}\in \mathcal{A}}|S_Q(\mathbf{v})|=\pi\left(2\sqrt{Q}, k, 1\right)- \pi\left(\sqrt{Q}, k, 1\right)\sim \frac{2\sqrt{Q}}{\varphi(k)\log Q}.$$
Therefore, one has
$$ \max_{\mathbf{v}\in \mathcal{A}}|S_Q(\mathbf{v})|\gg \frac{\sqrt{Q}}{k^{\ell+1}\log Q}\gg \sqrt{Q}\exp\left(-\frac{c_k}{2}\frac{\log Q}{\log\log Q}\right),$$
for some positive constant $c_k>0$.
Let $\mathbf{v}_{\text{max}}$ be such that 
$$
\max_{\mathbf{v}\in \mathcal{A}}|S_Q(\mathbf{v})|= |S_Q(\mathbf{v}_{\text{max}})|.
$$
Note that if $q_1\neq q_2$ are both in $S_Q(\mathbf{v}_{\text{max}})$ then $\psi_{q_1}\overline{\psi_{q_2}}(p_j)=1$ for all $1\leq j\leq \ell.$ Finally, the number of characters $\widetilde{\psi}_m\in \mathcal{F}_k(Q)$ such that  $m=q_1q_2$ and $q_1, q_2 \in S_Q(\mathbf{v}_{\text{max}})$ equals
$$ \binom{|S_Q(\mathbf{v}_{\text{max}})|}{2}\gg Q\exp\left(-c_k\frac{\log Q}{\log\log Q}\right),$$
as desired.
\end{proof}

The second ingredient in the proof of Theorem \ref{AllOrders} is to approximate $L(1, \chi)$ by a short Euler product. Using zero density estimates together with the large sieve,  Granville and Soundararajan  \cite{GrSo1} proved that this can be done for almost all primitive characters  $\chi \pmod q$ with $q\leq Q$. More precisely, they established
 \begin{pro}[Proposition 2.2 of \cite{GrSo1}]\label{GranvilleSound}
Let $A>2$ be fixed. Then, for all but at most $Q^{2/A+o(1)}$ primitive characters $\chi \pmod q$ with $q\leq Q$ we have 
$$ 
L(1,\chi)=\prod_{p\leq (\log Q)^A}\left(1-\frac{\chi(p)}{p}\right)^{-1}\left(1+O\left(\frac{1}{\log\log Q}\right)\right).
$$
\end{pro}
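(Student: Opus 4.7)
The plan is to express $\log L(1,\chi)$ as a truncated sum over prime powers plus a tail, and to show the tail is small for all but $Q^{2/A+o(1)}$ characters. Starting from the identity
$$\log L(1,\chi) = \sum_{n\ge 2} \frac{\Lambda(n)\chi(n)}{n\log n},$$
I separate the contribution of $n\le y := (\log Q)^A$ from the rest. The terms with $n=p^k$, $p^k\le y$, together with the prime powers with $p\le y$ and $p^k>y$ differing by $O(y^{-1/2})$, reassemble into $-\sum_{p\le y}\log(1-\chi(p)/p)$. Exponentiating, the proposition reduces to showing that the tail
$$T(\chi) := \sum_{n > y} \frac{\Lambda(n)\chi(n)}{n\log n}$$
satisfies $|T(\chi)|\ll 1/\log\log Q$ for all but $Q^{2/A+o(1)}$ primitive characters $\chi$ of conductor $q\le Q$.

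To control $T(\chi)$ I would compute a high even moment. After a dyadic decomposition $T(\chi) = \sum_j T_j(\chi)$, with $T_j$ supported on $n\in(2^j y, 2^{j+1}y]$, one estimates
$$\sum_{q\le Q}\sum_{\chi\bmod q}^{*}|T_j(\chi)|^{2k}$$
by writing the $k$-fold product as a single Dirichlet polynomial and invoking the multiplicative large sieve,
$$\sum_{q\le Q}\sum_{\chi\bmod q}^{*}\Big|\sum_{m\le M} b_m\chi(m)\Big|^{2}\ll (Q^2 + M)\sum_m|b_m|^2.$$
Since the coefficients $\Lambda(n)/(n\log n)$ are supported on prime powers, the $\ell^2$-norm of the $k$-fold convolution carries an essentially $k!$ saving coming from permutation symmetries, so the right-hand side of the moment bound is of order $(Q^2+ (2^j y)^k)\cdot k!\,(2^j y)^{-k}(\log)^{O(1)}$. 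Choosing $k\asymp (2\log Q)/(A\log\log Q)$ so that $y^k\asymp Q^2$, and applying Markov's inequality to the moment, the set on which $|T_j(\chi)|$ exceeds $c/(k\log\log Q)$ has size at most $Q^{2/A+o(1)}$; a union bound over the $O(\log Q)$ dyadic blocks is absorbed into the $o(1)$.

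The main obstacle is the quantitative balancing: the exponent $2/A$ has to emerge exactly from the transition between $Q^2$ and $y^k = (\log Q)^{Ak}$ in the large sieve, which pins down the optimal $k$, and any slack in the off-diagonal treatment of the $k$-fold convolution of a prime-power supported sequence would degrade the exponent. Once this balance is secured, converting the additive error $O(1/\log\log Q)$ in $\log L(1,\chi)$ into the multiplicative factor $1+O(1/\log\log Q)$ by Taylor expansion, and discarding the negligible contribution of higher prime powers, finishes the proof. An alternative route would replace the moment argument by a zero-density input, bounding the number of primitive $\chi\bmod q$ for which $L(s,\chi)$ has a zero to the right of $1 - c/\log y$ via Montgomery's density theorem; this delivers the same approximation but with a weaker exponent, so the large-sieve moment method is the cleaner way to reach the stated $Q^{2/A+o(1)}$.
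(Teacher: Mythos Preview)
The paper does not prove this proposition; it quotes it as Proposition~2.2 of Granville--Soundararajan \cite{GrSo1} and only records that the argument uses ``zero density estimates together with the large sieve.'' So there is no in-paper proof to match line by line, but the method it points to is not the one you outline.

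Your moment computation is the right engine for the exponent: balancing $y^k$ against $Q^2$ in the multiplicative large sieve with $k\asymp 2\log Q/(A\log\log Q)$ and using $k!\approx (k/e)^k$ does produce an exceptional set of size $Q^{2/A+o(1)}$ for any single dyadic block near $y$. The gap is your claim that there are ``$O(\log Q)$ dyadic blocks.'' The tail $T(\chi)=\sum_{n>y}\Lambda(n)\chi(n)/(n\log n)$ is an infinite series, and once a block lies beyond any fixed power of $Q$ the large sieve is in the diagonal-dominated regime $M\gg Q^2$; the resulting moment bound $\asymp k!/(\log M)^k$ no longer beats the threshold after Markov, and no choice of block-dependent $k$ or thresholds $\epsilon_j$ with $\sum_j\epsilon_j\ll 1/\log\log Q$ makes the union bound converge. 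Nor can one truncate the far tail by P\'olya--Vinogradov, since the sum is over primes (and rewriting as a sum over $y$-rough integers costs a factor $2^{\pi(y)}$, which is super-polynomial in $Q$ for $y=(\log Q)^A$).

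This is exactly why Granville--Soundararajan use zero density \emph{together with} the large sieve rather than as an alternative to it. A zero-density estimate first discards $Q^{o(1)}$ characters and secures, for the survivors, a zero-free rectangle near $s=1$; for those $\chi$ one gets $\log L(1,\chi)=\sum_{n\le z_0}\Lambda(n)\chi(n)/(n\log n)+o(1)$ with $z_0$ a fixed (large) power of $\log Q$. Only then does the moment/large-sieve step enter, to show $\sum_{y<p\le z_0}\chi(p)/p$ is small outside a set of size $Q^{2/A+o(1)}$. So zero density is not ``an alternative route with a weaker exponent'': on its own it would be, but its actual role is to supply the truncation your sketch lacks, after which your large-sieve argument delivers the stated exponent.
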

Let  $z=(\log Q)^A$ for some $A>2$,  and $y\leq (\log Q)$ be a large real number. Then, note that
$$
\prod_{p\leq z}\left(1-\frac{\widetilde{\psi}_m(p)}{p}\right)^{-1}= \prod_{p\leq y}\left(1-\frac{\widetilde{\psi}_m(p)}{p}\right)^{-1}\exp\left(\sum_{y<p<z} \frac{\widetilde{\psi}_m(p)}{p}+O\left(\frac{1}{\sqrt{y}\log y}\right)\right).
$$
By Lemma \ref{Pigeonhole}, there are at least $Q^{1+o(1)}$ characters $\widetilde{\psi}_m\in \mathcal{F}_k(Q)$ for which the product  $\prod_{p\leq y}(1-\widetilde{\psi}_m(p)/p)^{-1}$ is as large as possible. The third and last ingredient in the proof of Theorem \ref{AllOrders} is the following proposition which gives an upper bound for the $2k$-th moment of $\sum_{y<p<z}\widetilde{\psi}_m(p)/p$ over the characters  $\widetilde{\psi}_m\in \mathcal{F}_k(Q)$, uniformly for $k$ in a large range. In particular, we shall later deduce that with very few exceptions in $ \mathcal{F}_k(Q)$, the prime sum $\sum_{y<p<z}\widetilde{\psi}_m(p)/p$ is small. 
\begin{pro}\label{LargeSieve} Let $k\geq 2$ be a fixed integer, and $k< y<(\log Q)^A$ be  a real number where $A>2$ is a constant. Put $z=(\log Q)^A$.  Then, for every positive integer $r\leq \log Q/(3Ak^2\log\log Q)$ we have 
$$\sum_{\widetilde{\psi}_m\in \mathcal{F}_k(Q)}\left|\sum_{y< p< z}\frac{\widetilde{\psi}_m(p)}{p}\right|^{2r}\ll_k 2^r r! Q\left(\sum_{y<p<z}\frac{1}{p^2}\right)^{r}+ Q^{1-1/(4k)}.$$
\end{pro}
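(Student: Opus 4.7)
My plan is a $2r$-th moment computation. Expanding,
\[
\sum_{\widetilde{\psi}_m\in\mathcal{F}_k(Q)}\Bigl|\sum_{y<p<z}\frac{\widetilde{\psi}_m(p)}{p}\Bigr|^{2r}
=\sum_{\mathbf{p},\mathbf{p}'}\frac{1}{NM}\sum_{\widetilde{\psi}_m\in\mathcal{F}_k(Q)}\widetilde{\psi}_m(N)\overline{\widetilde{\psi}_m(M)},
\]
where $\mathbf{p}=(p_1,\ldots,p_r)$ and $\mathbf{p}'=(p_1',\ldots,p_r')$ range over ordered $r$-tuples of primes in $(y,z)$, $N=\prod_i p_i$ and $M=\prod_j p_j'$. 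The hypothesis $r\le\log Q/(3Ak^2\log\log Q)$ forces $NM\le z^{2r}\le Q^{2/(3k^2)}<\sqrt{Q}$, so $(NM,m)=1$ for every $m=q_1q_2$ indexing $\mathcal{F}_k(Q)$, and the congruence $N\equiv M\pmod{m}$ is equivalent to $N=M$ as integers, i.e.\ to $\{p_i\}=\{p_j'\}$ as multisets.

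On this diagonal the inner character sum is simply $|\mathcal{F}_k(Q)|\le Q$. Grouping by multiplicity vectors $(a_p)$ with $\sum_p a_p=r$, the diagonal contribution equals $|\mathcal{F}_k(Q)|\cdot D$ with
\[
D=\sum_{\substack{(a_p)_p\\ \sum_p a_p=r}}\Bigl(\frac{r!}{\prod_p a_p!}\Bigr)^{\!2}\prod_p\frac{1}{p^{2a_p}}.
\]
The trivial inequality $(r!/\prod_p a_p!)^2\le r!\cdot(r!/\prod_p a_p!)$ together with the multinomial theorem gives $D\le r!\bigl(\sum_{y<p<z}1/p^2\bigr)^r$, so the diagonal fits comfortably inside the main term $2^r r! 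Q\bigl(\sum 1/p^2\bigr)^r$.

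For the off-diagonal ($\{p_i\}\ne\{p_j'\}$), I would use $\widetilde{\psi}_m=\psi_{q_1}\overline{\psi_{q_2}}$ to rewrite the inner character sum as $\sum_{q_1<q_2\in\mathcal{P}}\psi_{q_1}(NM^{-1})\overline{\psi_{q_2}(NM^{-1})}$, where $\mathcal{P}=\{q\text{ prime}: \sqrt{Q}<q<2\sqrt{Q},\ q\equiv 1\pmod{k}\}$. Pairing $(N,M)$ with $(M,N)$ produces complex-conjugate inner sums, making the off-diagonal contribution real and equal to
\[
\sum_{\substack{\{N,M\}\\ N\ne M}}\frac{c_N c_M}{NM}\Bigl(|\Sigma(N,M)|^2-|\mathcal{P}|\Bigr),\qquad \Sigma(N,M)=\sum_{q\in\mathcal{P}}\psi_q(NM^{-1}),
\]
where $c_N$ counts ordered $r$-tuples of primes in $(y,z)$ with product $N$.

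The main obstacle is showing that this is $O_k(Q^{1-1/(4k)})$. The trivial bound $|\Sigma|\le|\mathcal{P}|$ yields only $\ll Q(\log\log z)^{2r}/\log^2 Q$, too weak by a factor of $Q^{1/(4k)+o(1)}$. To obtain the required power saving, I would expand $|\Sigma(N,M)|^2=\sum_{q_1,q_2\in\mathcal{P}}\psi_{q_1}(NM^{-1})\overline{\psi_{q_2}(NM^{-1})}$, interchange summation with the outer sum over $\{N,M\}$, and invoke a Polya--Vinogradov or Burgess-type estimate for the partial character sums of the primitive characters $\widetilde{\psi}_{q_1q_2}$ of conductor $\asymp Q$ evaluated at arguments of size $\ll Q^{1/(3k^2)}$. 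The exponent $1/(4k)$ in the error term arises precisely from the Burgess threshold for $k$-th order characters of modulus $\sim Q$, and once this bound is in hand the proposition follows upon combining with the diagonal estimate.
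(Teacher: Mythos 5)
Your diagonal computation is fine, and the observation that $NM<\sqrt{Q}<m$ kills the ``trivial'' off-diagonal terms is sound. But the plan for bounding the genuine off-diagonal contribution has a fundamental gap, and it is precisely the gap that Elliott's large sieve over $k$-th order characters (Lemma 2.5 in the paper) exists to fill.

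The quantity you need to control,
\[
\Sigma(N,M)=\sum_{q\in\mathcal{P}}\psi_q(N)\,\overline{\psi_q(M)},
\]
is a sum over the \emph{modulus} $q$, with a different character $\psi_q$ (the normalized $k$-th order character with $\psi_q(g_q)=\zeta_k$) attached to each $q$, evaluated at the fixed pair $(N,M)$. This is not a character sum in the P\'olya--Vinogradov or Burgess sense at all: those estimates bound $\sum_{n\le x}\chi(n)$ for a fixed character $\chi$ as the argument $n$ runs over an interval, whereas here the argument is fixed and the modulus runs. For quadratic characters one can sometimes flip the roles of modulus and argument via reciprocity, but for $k$-th order characters with $k\ge 3$ no such elementary reciprocity is available to Dirichlet characters, and that is exactly the obstruction the paper is working around. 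Your remark that ``the exponent $1/(4k)$ arises from the Burgess threshold'' is therefore not correct; in the paper's argument the saving $Q^{-1/(4k)}$ comes out of the off-diagonal error term $H^kQ^{1-2/(k+1)}$ in Elliott's inequality, after fixing $q_1$ and applying the large sieve in $q_2$, and has nothing to do with Burgess.

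There is a second, subtler issue. You identify the diagonal with $N=M$. But $\widetilde\psi_m$ has order $k$, so $\widetilde\psi_m(N)\overline{\widetilde\psi_m(M)}=1$ whenever $N\bar M$ is a $k$-th power residue modulo $m$, not just when $N=M$. These near-diagonal terms are not individually small and cannot all be pushed into an $O(Q^{1-1/(4k)})$ error. In the paper this enlarged diagonal shows up as the condition ``$n_1n_2^{k-1}$ is a $k$-th power'' in Elliott's lemma (sharpened to integral $k$-th powers by Remark 2.6), and a nontrivial combinatorial estimate on the coefficients $b_r$ is then needed to show that its total contribution is still $\ll 2^r r!\,Q\big(\sum_{y<p<z}p^{-2}\big)^r$. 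Your proposal omits both the enlarged diagonal and the combinatorics required to control it, so even if a usable bound on $\Sigma(N,M)$ were available, the argument would still not close.
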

To establish this result, we shall use the following large sieve inequality over $k$-th order characters,  which is due to Elliott \cite{El}. 

\begin{lem}[Lemma 33 of \cite{El}]\label{Elliott}  Let $k\geq 2$ be a fixed integer, and $\{\lambda_n\}_{n\geq 1}$ be a sequence of complex numbers. Then 
\begin{align*}
\sum_{\substack{p\leq Q\\ p\equiv 1\pmod k}} \ \ \sideset {}{^\star}\sum_{\chi\pmod p} &\left|\sum_{n\leq H}\lambda_n\chi(n)\right|^2
\ll_k Q\left|\sum_{\substack{m, n\leq H\\ mn^{k-1}=\mathfrak{a}^k}} \lambda_n\overline{\lambda_m}\right|+ H^{k}Q^{1-2/(k+1)}\left(\sum_{n\leq H}|\lambda_n|\right)^2,
\end{align*}
where the sum $\sideset {}{^\star}\sum_{\chi\pmod p}$ is confined to characters of order $k$ modulo $p$ for each prime modulus $p$, and $\mathfrak{a}$ is an algebraic integer of the cyclotomic field $K=\mathbb{Q}(\zeta_k)$. 
\end{lem}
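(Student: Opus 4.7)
The plan is to expand via Parseval, evaluate the inner sum $\sum_{\chi\in\mathcal X_k(p)}\chi(x)$ via Möbius inversion in the subgroup lattice of characters of $(\mathbb Z/p)^*$, identify the main term using the Kummer--Chebotarev characterization of when an integer is a $k$-th power residue modulo many primes, and handle the off-diagonal via a large sieve over $\mathbb Z[\zeta_k]$. Since $\chi^k=\chi_0$ gives $\overline{\chi(m)}=\chi(m)^{k-1}=\chi(m^{k-1})$, Parseval rewrites the left-hand side as
$$\sum_{m,n\leq H}\lambda_n\overline{\lambda_m}\,S(nm^{k-1}),\qquad S(x):=\sum_{\substack{p\leq Q\\p\equiv 1\pmod k}}\ \sideset{}{^\star}\sum_{\chi\in\mathcal X_k(p)}\chi(x),$$
and Möbius inversion gives $\sideset{}{^\star}\sum_{\chi\in\mathcal X_k(p)}\chi(x)=\sum_{d\mid k}\mu(k/d)\,d\,\mathbf 1[x\text{ is a $d$-th power residue mod }p]$, reducing $S(x)$ to a signed weighted count of primes for which $x$ is a $d$-th power residue.

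For the main term, Chebotarev applied to the Kummer extension $K(x^{1/d})/K$ with $K=\mathbb Q(\zeta_k)$ (noting that $p\equiv 1\pmod k$ splits completely in $K$) shows that the density of primes with $x$ being a $d$-th power residue modulo $p$ equals $1$ if $x$ is a $d$-th power in $K$ and is at most $1/2$ otherwise. Thus $S(nm^{k-1})$ is of size $\asymp\varphi(k)\pi(Q;k,1)$ precisely when $nm^{k-1}=\mathfrak a^k$ for some $\mathfrak a\in\mathbb Z[\zeta_k]$, and these diagonal pairs contribute
$$\ll_k Q\Big|\sum_{\substack{m,n\leq H\\nm^{k-1}=\mathfrak a^k}}\lambda_n\overline{\lambda_m}\Big|$$
to the bound, matching the first term of the target inequality.

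For the off-diagonal contribution, I would view each order-$k$ Dirichlet character modulo $p$ (with $p\equiv 1\pmod k$, so $p$ is totally split in $K$) as a $k$-th power residue symbol $\bigl(\,\cdot\,/\mathfrak p\bigr)_k$ attached to a prime ideal $\mathfrak p$ of $\mathbb Z[\zeta_k]$ above $p$. One then applies the arithmetic large sieve for these residue symbols over $\mathbb Z[\zeta_k]$: prime ideals have $N\mathfrak p=p\leq Q$, while integer arguments $n\leq H$ give principal ideals $(n)$ of norm $n^{\varphi(k)}\leq H^{\varphi(k)}$. Combining this with Weil's bound on Jacobi sums to control the short $k$-th power residue character sums modulo $p$ yields an off-diagonal estimate of size $H^k Q^{1-2/(k+1)}(\sum|\lambda_n|)^2$; the exponent $1-2/(k+1)$ arises from an optimization of the large sieve in the degree-$\varphi(k)$ extension $K/\mathbb Q$ against the $L^1$-norm of the coefficients.

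The main obstacle is making the last step fully effective with the explicit exponent $1-2/(k+1)$ and uniformly in $\lambda_n$. Beyond the qualitative Chebotarev input one needs the Weil/Deligne bound for Jacobi (or more general Gauss) sums to cancel short character sums to prime modulus $p$, and a careful reconciliation between the one-dimensional support of $\lambda_n$ (rational integers) and the $\varphi(k)$-dimensional ambient ring $\mathbb Z[\zeta_k]$; the diagonal condition $nm^{k-1}=\mathfrak a^k$ with $\mathfrak a\in\mathbb Z[\zeta_k]$ (rather than $\mathbb Z$) is precisely what is forced by this embedding.
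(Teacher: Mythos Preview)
The paper does not give its own proof of this lemma; it is quoted as Lemma~33 of Elliott~\cite{El} and used as a black box in the proof of Proposition~\ref{LargeSieve}. So there is no in-paper argument to compare against directly, and the honest benchmark is Elliott's original proof.

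Your structural outline matches the skeleton of that argument: expand the square, use $\overline{\chi(m)}=\chi(m^{k-1})$ for order-$k$ characters, identify each character of exact order $k$ modulo a prime $p\equiv 1\pmod k$ with a power residue symbol $(\cdot/\mathfrak p)_k$ for a degree-one prime $\mathfrak p\mid p$ of $K=\mathbb Q(\zeta_k)$, and recognize that the ``diagonal'' condition is precisely $nm^{k-1}=\mathfrak a^k$ with $\mathfrak a\in\mathcal O_K$. That part is fine.

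The genuine gap is the off-diagonal estimate, which you yourself flag. Two concrete problems. First, invoking Weil's bound on Jacobi sums is the wrong tool: Jacobi and Gauss sums produce $\sqrt p$-cancellation in a complete sum over residues modulo a \emph{fixed} prime $p$; they do not give cancellation in the sum over the modulus $p$ with the argument $x=nm^{k-1}$ held fixed, which is what $S(x)$ is. The saving $Q^{-2/(k+1)}$ must come from the $p$-average. Second, your Chebotarev input (``density at most $1/2$ otherwise'') is purely qualitative and gives at best $S(x)=o(\pi(Q))$ for off-diagonal $x$, which is hopelessly far from $H^k Q^{1-2/(k+1)}$.

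In Elliott's proof the exponent $1-2/(k+1)$ does not come from $L$-functions or from Weil; it arises from an elementary counting argument in $\mathbb Z[\zeta_k]$ (a lattice-point estimate for algebraic integers in a box satisfying a congruence, optimized against the trivial $L^1$ bound on the coefficients). If you want to complete your sketch, that is the missing engine; see the sequence of lemmas in \cite{El} leading up to Lemma~33.
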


\begin{rem}\label{algebraic}
If the sequence $\lambda_n$ is supported only on integers $n$ which are coprime to $k$ (that is $\lambda_n=0$ for all $(n, k)>1$), then one might replace the condition  $mn^{k-1}=\mathfrak{a}^k$ by $mn^{k-1}$ is an integral $k$-th power (i.e. one might take $\mathfrak{a}\in \mathbb{N}$). To see this, put $M= mn^{k-1}$ and let $M= p_1^{\alpha_1}\cdots p_{\ell}^{\alpha_{\ell}}$ be its prime factorization in $\mathbb{N}$. If $M=\mathfrak{a}^k$ for some alegbraic integer $\mathfrak{a}$ of $K$  then we must have $p_j^{\alpha_j}= \mathfrak{a}_j^k$ for some algebraic integers $\mathfrak{a}_1, \dots, \mathfrak{a}_{\ell}$ of $K$. Moreover, since $p_j\nmid k$, then $p_j$ is unramified in $K$, and hence we must have $k|\alpha_j$ for all $1\leq j\leq \ell$, which implies that $M$ is an integral $k$-th power.

\end{rem}

\begin{proof}[Proof of Proposition \ref{LargeSieve}]
First, observe that 
$$ \left(\sum_{y< p< z}\frac{\widetilde{\psi}_m(p)}{p}\right)^r= 
\sum_{y^r< n < z^r} \frac{b_r(n)\widetilde{\psi}_m(n)}{n},$$
where 
\begin{equation}\label{Combinatorics}
b_{r}(n):= \sum_{\substack{y<p_1, \dots, p_{r}<z\\ p_1\cdots p_{r}=n}}1.
\end{equation}
Note that $0\leq b_{r}(n)\leq r!$. Moreover, $b_{r}(n)=0$ unless $n=p_1^{\alpha_1}\cdots p_{s}^{\alpha_s}$ where  $y< p_1<p_2<\cdots<p_s< z$ are distinct primes and $\Omega(n)=\alpha_1+\cdots+\alpha_s=r$ (where $\Omega(n)$ is the number of prime divisors of $n$ counting multiplicities). In this case, we have 
\begin{equation}\label{CombFormula}
b_{r}(n)=\binom{r}{\alpha_1, \dots, \alpha_s}.
\end{equation}
Using this formula, one can easily deduce that if $n_1$ and $n_2$ are positive integers with $\Omega(n_1)=r_1$ and $\Omega(n_2)=r_2$ then
\begin{equation}\label{CombInequality}
b_{r_1+r_2}(n_1n_2)\leq  \binom{r_1+r_2}{r_1} b_{r_1}(n_1)b_{r_2}(n_2).
\end{equation}
Recall that for all $\widetilde{\psi}_m\in\mathcal{F}_k(Q)$, there exist primes $\sqrt{Q}<q_1<q_2< 2\sqrt{Q}$ such that $q_1\equiv q_2 \equiv 1\pmod k$ and $m=q_1q_2$. In this case we have $\widetilde{\psi}_m(n)=\psi_{q_1}(n)\overline{\psi_{q_2}}(n).$ Therefore, we have
\begin{equation}\label{expansion}
\sum_{\widetilde{\psi}_m\in \mathcal{F}_k(Q)}\Bigg|\sum_{y< p< z}\frac{\widetilde{\psi}_m(p)}{p}\Bigg|^{2r}\leq \sum_{\substack{q_1< 2\sqrt{Q}\\ q_1\equiv 1\bmod k}} \ \sum_{\substack{q_2< 2\sqrt{Q}\\ q_2\equiv 1\bmod k}}\left|\sum_{y^r< n< z^r} \frac{b_r(n)\overline{\psi_{q_1}(n)}}{n}\psi_{q_2}(n)\right|^2.
\end{equation}
We define $$\lambda_n=\frac{b_r(n)\overline{\psi_{q_1}(n)}}{n}.$$ 
Since $y>k$ then $b_r(n)=0$ if $(n,k)>1$. Therefore, it follows from Lemma \ref{Elliott} and Remark \ref{algebraic} that 
\begin{equation}\label{ElliotLargeS}
\begin{aligned}
&\sum_{\substack{q_2< 2\sqrt{Q}\\ q_2\equiv 1\bmod k}}\left|\sum_{y^r< n< z^r} \frac{b_r(n)\overline{\psi_{q_1}(n)}}{n}\psi_{q_2}(n)\right|^2\\
&\ll_k Q^{1/2}\left|\sum_{\substack{y^r< n_1, n_2<z^r \\ n_1n_2^{k-1}  \text{ is a } k \text{-th power}}} \frac{b_r(n_1)b_r(n_2)\overline{\psi_{q_1}(n_1)}\psi_{q_1}(n_2)}{n_1n_2}\right| + E_1(Q),\\
& \ll_k Q^{1/2}\sum_{\substack{y^r< n_1, n_2< z^r \\ n_1n_2^{k-1}\text{ is a } k \text{-th power}}} \frac{b_r(n_1)b_r(n_2)}{n_1n_2}+ E_1(Q),\\
\end{aligned}
\end{equation}
where 
\begin{equation}\label{error1}
E_1(Q)\ll_k z^{rk}Q^{1/2-1/(k+1)} \left(\sum_{y^r\leq n\leq z^r}\frac{b_r(n)}{n}\right)^2\ll z^{rk}Q^{1/2-1/(k+1)}\left(\sum_{y\leq p \leq z}\frac{1}{p}\right)^{2r} \ll Q^{1/2-1/(4k)}.
\end{equation}
Next, we bound the main term on the right hand side of \eqref{ElliotLargeS}. Let $n_1$ and $n_2$ be positive integers such that $\Omega(n_1)=\Omega(n_2)=r$ and 
put $d=(n_1,n_2)$. Also, put $n_1=dm_1$ and $n_2=dm_2$.  Since $n_1n_2^{k-1}$ is a $k$-th power then both $m_1$ and $m_2$ are $k$-th powers since $(m_1,m_2)=1.$ Let $m_1=\ell_1^k$ and $m_2=\ell_2^k$ , and put $s=\Omega(\ell_1)$. Since $\Omega(n_1)=\Omega(n_2)=r$, then  $\Omega(\ell_2)=s$ and $\Omega(d)=r-ks$. 
Therefore, by \eqref{CombInequality} we obtain
\begin{equation}\label{BoundCombCoef}
\begin{aligned}
b_r(n_1)b_r(n_2)
&\leq \binom{r}{ks}^2 b_{ks}(\ell_1^k)b_{ks}(\ell_2^k)b_{r-ks}(d)^2\\
&\leq \frac{(r!)^2}{(ks)!^2(r-ks)!} b_{ks}(\ell_1^k)b_{ks}(\ell_2^k)b_{r-ks}(d),\\
\end{aligned}
\end{equation}
since $b_{t}(n)\leq t!$ for any positive integers $t$ and $n$. 
Furthermore,  by \eqref{CombInequality} together with a simple inductive argument we derive
$$
b_{ks}(\ell_i^k)\leq \frac{(ks)!}{(s!)^k}b_{s}(\ell_i)^k\leq \frac{(ks)!}{s!}b_{s}(\ell_i) \text{ for } i=1,2.
$$
Inserting this estimate in \eqref{BoundCombCoef} yields 
$$
b_r(n_1)b_r(n_2)\leq \frac{(r!)^2}{(s!)^2(r-ks)!} b_{s}(\ell_1)b_{s}(\ell_2)b_{r-ks}(d)\leq r! \binom{r}{ks} b_{s}(\ell_1)b_{s}(\ell_2)b_{r-ks}(d),
$$
since $(ks)!\geq (2s)!\geq (s!)^2.$ Thus we deduce
\begin{equation}\label{BoundDiagonalLargeSie}
\begin{aligned}
\sum_{\substack{y^r<n_1, n_2< z^r \\ n_1n_2^{k-1} \text{ is a } k \text{-th power}}} \frac{b_r(n_1)b_r(n_2)}{n_1n_2}
&\leq  r!\sum_{0\leq s\leq r/k}\binom{r}{ks} \left(\sum_{d}\frac{b_{r-ks}(d)}{d^2}\right) \left(\sum_{\ell} \frac{b_{s}(\ell)}{\ell^k}\right)^2\\
&=r!\sum_{0\leq s\leq r/k}\binom{r}{sk} \left(\sum_{y<p<z}\frac{1}{p^2}\right)^{r-sk}\left(\sum_{y<p<z}\frac{1}{p^k}\right)^{2s},
\end{aligned}
\end{equation}
since 
$$
\sum_{n}\frac{b_t(n)}{n^{\alpha}}=\left(\sum_{y<p<z}\frac{1}{p^{\alpha}}\right)^t.
$$
Furthermore, since $k\geq 2$ then for any positive integer $n$, the Euclidean norm in $\mathbb{R}^n$ is larger than the $k$-norm. Therefore we have 
$$\left(\sum_{y<p<z}\frac{1}{p^k}\right)^{1/k}\leq \left(\sum_{y<p<z}\frac{1}{p^2}\right)^{1/2}.$$  Inserting this bound in \eqref{BoundDiagonalLargeSie} yields
 $$ \sum_{\substack{y^r< n_1, n_2<z^r \\ n_1n_2^{k-1} \text{ is a } k \text{-th power }}} \frac{b_r(n_1)b_r(n_2)}{n_1n_2}
 \leq 2^r r! \left(\sum_{y<p<z}\frac{1}{p^2}\right)^{r}.
 $$
Combining this bound with equations \eqref{expansion}, \eqref{ElliotLargeS} and \eqref{error1}  completes the proof.

\end{proof}

We are now ready to prove Theorem \ref{AllOrders}.

\begin{proof}[Proof of Theorem \ref{AllOrders}]
Let $z= (\log Q)^{5}$, and $2\leq y\leq (\log Q)^{3/2}$ be a real number to be chosen later. Then, by Proposition  \ref{GranvilleSound} it follows that for all but at most $Q^{1/2}$ primitive characters $\chi\pmod q$ with $q\leq Q$ we have 
\begin{equation}\label{FirstCond}
\begin{aligned}
L(1,\chi)&= \prod_{p\leq z} \left(1-\frac{\chi(p)}{p}\right)^{-1}\left(1+O\left(\frac{1}{\log\log Q}\right)\right)\\
&= \prod_{p\leq y} \left(1-\frac{\chi(p)}{p}\right)^{-1}\exp\left(\sum_{y<p<z}\frac{\chi(p)}{p}\right) \left(1+O\left(\frac{1}{\log\log Q}+\frac{1}{\sqrt{y}\log y}\right)\right),
\end{aligned}
\end{equation}
by the prime number theorem. 

Furthermore, taking $r=[\log Q/(15k^2\log\log Q)]$ in Proposition \ref{LargeSieve} we obtain that the number of characters $\widetilde{\psi}_m\in \mathcal{F}_k(Q)$ such that 
$$ \left|\sum_{y<p<z}\frac{\widetilde{\psi}_m(p)}{p}\right|>\frac{1}{\log\log Q}$$ is 
\begin{equation}\label{SecondCond}
\ll Q \left(\frac{4r(\log\log Q)^2}{y\log y}\right)^{r} \ll Q\left(\frac{\log Q\log \log Q}{3k^2 y\log y}\right)^r,
\end{equation}
since $\sum_{y<p}1/p^2\leq 2/y\log y$ by the prime number theorem. 

On the other hand, it follows from Lemma \ref{Pigeonhole} that for some constant  $c_k>0$, there are at least $Q\exp(-c_k\log Q/\log\log Q)$ characters $\widetilde{\psi}_m\in \mathcal{F}_k(Q)$ for which $\widetilde{\psi}_m(p)=1$ for all primes $p\leq \log Q$.   Choosing $y= b_k\log Q$ for some suitably large constant $b_k>0$ we deduce from \eqref{FirstCond} and \eqref{SecondCond} that there are at least $Q\exp(-2c_k\log Q/\log\log Q)$ characters  $\widetilde{\psi}_m\in \mathcal{F}_k(Q)$ for which $\widetilde{\psi}_m(p)=1$ for all primes $p\leq \log Q$,  and such that \eqref{FirstCond} holds and 
$$ \left|\sum_{y<p<z}\frac{\widetilde{\psi}_m(p)}{p}\right|\leq \frac{1}{\log\log Q}.$$ For these characters $\widetilde{\psi}_m$, we have by \eqref{FirstCond} that
\begin{align*}
L(1,\widetilde{\psi}_m)&=\prod_{p\leq \log Q} \left(1-\frac{1}{p}\right)^{-1}\exp\left(\sum_{\log Q<p<y}\frac{\widetilde{\psi}_m(p)}{p}\right) \left(1+O\left(\frac{1}{\log\log Q}\right)\right) \\
&= e^{\gamma}\log\log Q+ O_k(1),
\end{align*}
 as desired.
 \end{proof}
 
%%%%%%%%%%%%%%%%%%%%%%%%%%%%%%%%%%%%%%%%%%%%%%%
 
 \section{Large values of $|L(1,\chi\xi)|$ for even order characters $\chi$: Proof of Theorem \ref{EvenOrdersFixedSign}}

It follows from Lemma \ref{Pigeonhole} that there is a character $\psi$ of order $k$ and large conductor $m=q_1q_2$ where $q_1$ and $q_2$ are primes, such that  $\psi(p)=1$ for all primes $p\leq (\log m)/2$. In order to prove Theorem \ref{EvenOrdersFixedSign}, our construction involves the  characters $\psi\chi_d$,
where $d$ ranges over a certain family of fundamental discriminants.   To exhibit large values of $L(1, \psi\chi_d\xi)$, we first prove that for many of these fundamental discriminants $d$ we have $ \chi_d(p)=\xi(p)$ for all the ``small'' primes $p$. 

 \begin{pro}\label{FirstPRIMES}
 Let  $\delta=\pm 1$, $Q$ be large and $2\leq y\leq \log Q$ be a real number. Put  $P(y)=\prod_{p\leq y} p$. Let $\varepsilon(p)= \pm 1$ for each prime $p$. Then there are 
 $$\gg \frac{Q}{2^{\pi(y)}\log y}$$
 fundamental discriminants $d\equiv 1\pmod 4$ such that $0<\delta d\leq Q$,  $(d,P(y))=1$  and $\chi_d(p)=\varepsilon(p)$ for all primes $p\leq y$. 
\end{pro}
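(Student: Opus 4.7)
The plan is to expand the indicator of the condition ``$\chi_d(p)=\varepsilon(p)$ for all $p\leq y$'' as a short sum of characters, and extract a main term. For $d$ coprime to $P(y)$, every $\chi_d(p)$ lies in $\{\pm 1\}$, so
\begin{equation*}
\prod_{p\leq y}\frac{1+\varepsilon(p)\chi_d(p)}{2}=\frac{1}{2^{\pi(y)}}\sum_{n\mid P(y)}\varepsilon(n)\chi_d(n),
\end{equation*}
with $\varepsilon$ and $\chi_d$ extended multiplicatively; this product equals $1$ when $\chi_d(p)=\varepsilon(p)$ for all $p\leq y$, and vanishes otherwise. Summing this identity over the set $\mathcal{D}_Q$ of squarefree $d\equiv 1\pmod 4$ with $\delta d\in(0,Q]$ and $(d,P(y))=1$ reduces the problem to estimating $|\mathcal{D}_Q|$ (from $n=1$) and controlling $\sum_{d\in\mathcal{D}_Q}\chi_d(n)$ for divisors $n>1$ of $P(y)$. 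A routine Möbius expansion of the squarefree condition together with Mertens' theorem gives $|\mathcal{D}_Q|\asymp Q\prod_{2<p\leq y}(1-1/p)\asymp Q/\log y$, so the $n=1$ contribution is of size $\gg Q/(2^{\pi(y)}\log y)$, which matches the target lower bound.

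For the error term, I would use that for each divisor $n\mid P(y)$ with $n>1$, the Kronecker symbol $d\mapsto \chi_d(n)=\left(\tfrac{d}{n}\right)$ is a non-principal Dirichlet character in $d$ of conductor dividing $4n\leq 4P(y)$. Writing $d=a^2 d'$ and applying Möbius to remove the squarefreeness of $d$, and then expanding $(d',P(y))=1$ via a second Möbius inversion, reduces the inner sum to a character sum of a non-principal Dirichlet character on an arithmetic progression of length $O(Q/a^2)$; Pólya--Vinogradov bounds each such sum by $O(\sqrt{P(y)}\log P(y))$. Summing over the $2^{\pi(y)}$ divisors $n$ of $P(y)$, over the absolutely convergent $a$-sum, and then dividing by $2^{\pi(y)}$, yields a total error of $O(\sqrt{P(y)}(\log P(y))^{O(1)})$.

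Under the hypothesis $y\leq\log Q$, the prime number theorem gives $P(y)\leq Q^{1+o(1)}$ and $2^{\pi(y)}\leq Q^{o(1)}$, so the error $Q^{1/2+o(1)}$ is comfortably dominated by the main term of order $Q/(2^{\pi(y)}\log y)$, and the proposition follows. The main technical obstacle is the bookkeeping: carefully combining the character expansion with two layers of Möbius inversion while verifying that $d\mapsto \chi_d(n)$ really defines a non-principal Dirichlet character modulo $4P(y)$ on each residue class, so that Pólya--Vinogradov produces genuine cancellation rather than a trivial bound.
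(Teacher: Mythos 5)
Your proposal follows the same route as the paper's proof: expand $\prod_{p\le y}(1+\varepsilon(p)\chi_d(p))/2$ multiplicatively into $2^{-\pi(y)}\sum_{n\mid P(y)}\varepsilon(n)\chi_d(n)$, extract the $n=1$ main term from a M\"obius/Mertens count of fundamental discriminants $d\equiv 1\pmod 4$ with $(d,P(y))=1$ (the paper isolates this as Lemma~\ref{CountFundDiscrimi}), and control the $n>1$ terms by a character-sum estimate over fundamental discriminants (the paper's Lemma~\ref{BoundNonSquare}). So the overall structure is identical.

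The one place your sketch goes astray is the error analysis. You invoke a per-term P\'olya--Vinogradov bound $O(\sqrt{P(y)}\log P(y))$ that is \emph{uniform} in the squarefree-M\"obius parameter $a$ (the $h$ with $h^2\mid d$), and then assert that the ``$a$-sum is absolutely convergent.'' It is not: that parameter ranges over $a\le\sqrt{Q}$, and summing a uniform bound over it produces an extra factor of $\sqrt{Q}$, giving $O(\sqrt{Q}\,\sqrt{P(y)}(\log P(y))^{O(1)})$, which is \emph{not} dominated by the main term when $y$ is close to $\log Q$. The missing step is the standard truncation: apply P\'olya--Vinogradov only for $a\le H$, use the trivial bound $O(Q/(a^2 h'))$ for $a>H$ (where the $1/a^2$ makes the tail convergent), and optimize $H$. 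This is exactly what Lemma~\ref{BoundNonSquare} does, yielding a per-divisor bound $\ll d(P(y))\,Q^{1/2}n^{1/4}(\log n)^{1/2}$ rather than $\sqrt{P(y)}\log P(y)$. After summing over $n\mid P(y)$ and dividing by $2^{\pi(y)}$, the genuine total error is of order $Q^{1/2}e^{y/3}\le Q^{5/6}$ for $y\le\log Q$ --- larger than the $Q^{1/2+o(1)}$ you claimed, but still comfortably below the main term $\gg Q/(2^{\pi(y)}\log y)=Q^{1-o(1)}$. So the conclusion stands; it is precisely the bookkeeping you flagged as the technical obstacle, and once the truncation is inserted your argument becomes the paper's.
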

To establish this proposition, we first need to count the fundamental discriminants $d$ such that $0<\delta d\leq Q$, $d\equiv 1\pmod 4$ and $p\mid d \implies p>y$. To this end, we use the following standard estimate whose proof we include for completeness.
\begin{lem}\label{CountFundDiscrimi} 
Let $m$ be a positive integer, and $Q$ be a large real number. The number of fundamental discriminants $d$ such that $0<\delta d\leq Q$, $d\equiv 1\pmod 4$ and $(d, m)=1$ equals
$$ \frac{3}{\pi^2}Q\prod_{p|2m} \left(1+\frac{1}{p}\right)^{-1}+O\left(d(m)Q^{1/2}\right), $$
where $d(m)=\sum_{b|m}1$ is the divisor function.
\end{lem}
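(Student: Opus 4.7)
The plan is to use M\"obius inversion to detect the squarefreeness implicit in ``$d$ is a fundamental discriminant with $d\equiv 1\pmod 4$'', since such $d$ are precisely the squarefree integers congruent to $1$ modulo $4$. Writing $d=a^2 e$ via the identity $\mu^2(d)=\sum_{a^2\mid d}\mu(a)$, the congruence $d\equiv 1\pmod 4$ forces $a$ to be odd (otherwise $4\mid a^2\mid d$) and hence $e\equiv 1\pmod 4$, while $(d,m)=1$ becomes $(a,m)=(e,m)=1$. The sum over $a$ then truncates at $a\leq Q^{1/2}$ and reduces the problem to estimating, for each such $a$, the count $T(Q/a^{2})$ where
\begin{equation*}
T(x) \;=\; \#\bigl\{\,e : 0<\delta e\leq x,\ e\equiv 1\pmod 4,\ (e,m)=1\,\bigr\}.
\end{equation*}

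The second step is to evaluate $T(x)$ by another M\"obius inversion to remove the coprimality condition: writing $\mathbf{1}_{(e,m)=1}=\sum_{b\mid(e,m)}\mu(b)$ and switching the order of summation, the contribution of even $b$ vanishes (since $b\mid e$ and $e\equiv 1\pmod 4$ are incompatible), while each odd $b\mid m$ contributes $x/(4b)+O(1)$. Summing yields
\begin{equation*}
T(x)\;=\;\frac{x}{4}\prod_{\substack{p\mid m\\ p>2}}\!\Bigl(1-\tfrac{1}{p}\Bigr) \;+\; O(d(m)).
\end{equation*}
Inserting this into the expression for $N$, the $O(d(m))$ error contributes $\ll d(m)\,Q^{1/2}$ after summing over $a\leq Q^{1/2}$, which is absorbed into the advertised error term.

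The main term becomes $\tfrac{Q}{4}\prod_{p\mid m,\,p>2}(1-1/p)\sum_{a}\mu(a)/a^{2}$, where the sum is over odd $a$ coprime to $m$ with $a\leq Q^{1/2}$; completing this sum to infinity introduces a tail error of $O(Q^{1/2})$ and yields the Euler product $\prod_{p>2,\,p\nmid m}(1-1/p^{2})$. Using $\prod_{p>2}(1-1/p^{2})=8/\pi^{2}$ together with the factorization $1-1/p^{2}=(1-1/p)(1+1/p)$, the product over $p>2,\,p\nmid m$ collapses against $\prod_{p>2,\,p\mid m}(1-1/p)$ to leave $(8/\pi^{2})\prod_{p>2,\,p\mid m}(1+1/p)^{-1}$; finally inserting the Euler factor $(1+1/2)^{-1}=2/3$ at $p=2$ recasts this as $\frac{3}{\pi^{2}}Q\prod_{p\mid 2m}(1+1/p)^{-1}$, as claimed. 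The only subtle point is the Euler-factor bookkeeping at $p=2$, where three distinct products (over odd $p\mid m$, over odd $p\nmid m$, and over $p\mid 2m$) need to be reconciled; this is routine once the restrictions are set up carefully.
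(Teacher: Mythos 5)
Your proof is correct, and it uses the same overall machinery as the paper (a double M\"obius inversion: one to detect squarefreeness, one to strip the coprimality condition), but with one genuine difference in how the condition $d\equiv 1\pmod 4$ is handled. The paper inserts the indicator $\tfrac{1}{2}\big(1+\chi_{-4}(d)\big)$ and then must show that the $\chi_{-4}$-twisted piece is negligible by a separate character-sum estimate; you instead fold the $\pmod 4$ condition into the final residue-class count, observing that for odd $b$ the constraints $b\mid e$ and $e\equiv 1\pmod 4$ single out one residue class modulo $4b$, giving $x/(4b)+O(1)$ directly. Your route avoids characters entirely and is slightly more elementary, at the cost of having to track the oddness of $a$ and of $b$ separately; the paper's route packages those parities into the coprimality to $2m$ and handles the modulus $4$ in one stroke with $\chi_{-4}$. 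Both yield the same main term and the same $O(d(m)Q^{1/2})$ error, and your Euler-factor bookkeeping at $p=2$ (converting $\prod_{p>2,\,p\mid m}(1+1/p)^{-1}$ into $\prod_{p\mid 2m}(1+1/p)^{-1}$ with the extra factor $2/3$) reconciles correctly with the stated form.
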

\begin{proof} We only prove the estimate when $\delta=1$, since the proof for the case $\delta=-1$ follows along similar lines. Let $\chi_{-4}$ be the non-principal real primitive  character modulo $4$, and $N_m(Q)$ be the number of fundamental discriminants $0<d\leq Q$ such $d\equiv 1\pmod 4$ and $(d, m)=1$. Then, writing $\mu^2(d)= \sum_{h^2\mid d}\mu(h)$ we obtain
\begin{equation}\label{CountFD}
\begin{aligned}
N_m(Q)
&=\sum_{\substack{0<d\leq Q\\ d\equiv 1 \pmod 4\\ (d, m)=1}}\mu^2(d)\\
&=\frac{1}{2}\sum_{\substack{0<d\leq Q\\ (d, 2m)=1}} \big(1+\chi_{-4}(d)\big)\sum_{h^2|d} \mu(h)\\
&=\frac{1}{2}\sum_{\substack{h\leq \sqrt{Q}\\ (h, 2m)=1}} \mu(h)
\sum_{\substack{b\leq Q/h^2\\ (b, 2m)=1}}\big(1+\chi_{-4}(b)\big).\\
\end{aligned}
\end{equation}
Now, observe that
$$\sum_{\substack{b\leq Q/h^2\\ (b, 2m)=1}}1= \sum_{a|2m}\mu(a)\sum_{\substack{b\leq Q/h^2 \\ a|b}} 1=\frac{Q}{h^2} \sum_{a|2m}\frac{\mu(a)}{a}+O\big(d(2m)\big)= \frac{Q}{h^2}\prod_{p|2m}\left(1-\frac{1}{p}\right)+O\big(d(2m)\big).$$
Similarly, one has 
$$ \sum_{\substack{b\leq Q/h^2\\ (b, 2m)=1}}\chi_{-4}(b)=  \sum_{a|2m}\mu(a)\sum_{\substack{b\leq Q/h^2 \\ a|b}}\chi_{-4}(b)=\sum_{a|2m}\mu(a)\chi_{-4}(a)\sum_{r\leq Q/(ah^2)}\chi_{-4}(r)=  O\big(d(2m)\big).$$
Combining these estimates with \eqref{CountFD} and using that $d(2m)\leq 2d(m)$, we deduce 
\begin{equation}\label{CountFD2}
N_m(Q)= \frac{Q}{2}\prod_{p|2m}\left(1-\frac{1}{p}\right)\sum_{\substack{h\leq \sqrt{Q}\\ (h, 2m)=1}} \frac{\mu(h)}{h^2}+O\left(d(m)Q^{1/2}\right).
\end{equation}
Finally note that
$$ 
\sum_{\substack{h\leq \sqrt{Q}\\ (h, 2m)=1}} \frac{\mu(h)}{h^2}= \sum_{\substack{h\geq 1\\ (h, 2m)=1}} \frac{\mu(h)}{h^2}+O\big(Q^{-1/2}\big)=\frac{6}{\pi^2}\prod_{p|2m}\left(1-\frac{1}{p^2}\right)^{-1}+O\big(Q^{-1/2}\big).
$$
Inserting this estimate in \eqref{CountFD2} completes the proof.

\end{proof}
The second ingredient in the proof of Proposition \ref{FirstPRIMES} is the following bound on character sums, which is a slight variation of Lemma 4.1  of Granville and Soundararajan  \cite{GrSo1}. Here and throughout, the sum  $\sideset {}{^\flat}\sum_{d}$ is confined to fundamental discriminants $d$. 
\begin{lem}\label{BoundNonSquare}
Let $\delta=\pm 1$. Let $m$ be positive integer, and $n\geq 2$ be an integer, not a perfect square. Then we have
$$
 \sideset {}{^\flat}\sum_{\substack{0<\delta d \leq Q\\ d\equiv 1\pmod 4\\ (d, m)=1}} \chi_d(n)\ll d(m) Q^{1/2} n^{1/4}(\log n)^{1/2}.
$$

 \end{lem}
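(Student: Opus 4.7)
The plan is to strip off the fundamental-discriminant condition by Möbius inversion, extract a non-principal Dirichlet character of conductor $O(n)$ by quadratic reciprocity, and then optimise a Pólya--Vinogradov bound against the trivial bound by splitting the Möbius variable at an appropriate height.

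First, I would peel off the arithmetic conditions on $d$. Detect the coprimality $(d,m)=1$ by $\sum_{a\mid(d,m)}\mu(a)$ (only odd squarefree $a\mid m$ contribute, since $d\equiv 1\pmod{4}$ is odd) and then the squarefreeness of $d/a$ by $\mu^{2}(d/a)=\sum_{h^{2}\mid d/a}\mu(h)$. Writing $d=ah^{2}b$, factoring $\chi_{d}(n)=\chi_{a}(n)\chi_{h^{2}}(n)\chi_{b}(n)$ and noting that summands with $(ah,n)>1$ vanish, the sum in the lemma becomes
\[
\sum_{\substack{a\mid m\\ a\text{ odd squarefree}}}\!\!\!\mu(a)\chi_{a}(n)\!\!\sum_{\substack{h\leq\sqrt{Q/a}\\ (h,2an)=1}}\!\!\!\mu(h)\chi_{h^{2}}(n)\!\!\sum_{\substack{0<\delta b\leq Q/(ah^{2})\\ b\equiv r(a,h)\!\!\pmod{8a}}}\!\!\!\chi_{b}(n),
\]
for a residue $r(a,h)$ determined by the congruences $d\equiv 1\pmod{4}$ and $(b,a)=1$.

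Second, I would turn the inner $b$-sum into a standard character sum, and this is where the real work lies. Writing $n=n_{1}^{2}n_{0}$ with $n_{0}>1$ squarefree (which is where the non-square hypothesis is used), the Kronecker-symbol form of quadratic reciprocity identifies $\chi_{b}(n)$, on each residue class of $b$ modulo $8$, with the Jacobi symbol $\bigl(\tfrac{n_{0}}{|b|}\bigr)$ times sign factors depending only on $b\bmod 8$. The change of variables $b=r(a,h)+8a\,j$ then turns the inner sum into a character sum in $j$ of length $\ll Q/(ah^{2})$ against a \emph{non-principal} Dirichlet character of conductor at most $n_{0}\leq n$, to which Pólya--Vinogradov applies to give the uniform bound $\ll\sqrt{n}\,\log n$. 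The main obstacle is precisely this bookkeeping: tracking the sign corrections coming from the prime $2$, from $\chi_{-4}$ and from $\chi_{8}$ carefully enough to verify that the residual character in $j$ really is non-principal of conductor $\leq n_{0}$, so that Pólya--Vinogradov can be applied uniformly in $a$ and $h$.

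Finally, I would split the $h$-sum at a height $H$ to be optimised. For $h\leq H$ the Pólya--Vinogradov bound above contributes $\ll d(m)H\sqrt{n}\log n$ in total (the $d(m)$ coming from the sum over $a\mid m$); for $h>H$ the trivial bound on the inner sum contributes $\ll d(m)\sum_{h>H}Q/h^{2}\ll d(m)Q/H$. Balancing the two with $H=\bigl(Q/(\sqrt{n}\log n)\bigr)^{1/2}$ delivers the claimed bound $\ll d(m)\,Q^{1/2}n^{1/4}(\log n)^{1/2}$, and the remaining summation over $a\mid m$ and the trivial-bound estimate are entirely routine.
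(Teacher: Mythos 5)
Your overall strategy---detect squarefreeness and coprimality by M\"obius, isolate a character in the ``denominator'' variable of conductor $O(n)$, apply P\'olya--Vinogradov, and then split the M\"obius variable $h$ at the height $H=\bigl(Q/(\sqrt{n}\log n)\bigr)^{1/2}$ and balance against the trivial bound---is exactly the strategy of the paper's proof, and your final balancing step is identical. The difference lies in how you handle the congruence $d\equiv 1\pmod 4$: you fix $b$ in a residue class modulo $8a$ and change variables $b=r(a,h)+8aj$, whereas the paper detects the congruence by orthogonality, writing $\mathbf{1}_{d\equiv 1(4)}=\tfrac12\sum_{\xi\pmod 4}\xi(d)$ and then, after $\mu^2(d)=\sum_{h^2\mid d}\mu(h)$ and $\mathbf{1}_{(b,m)=1}=\sum_{a\mid m}\mu(a)$, arrives at an unrestricted inner sum $\sum_{\ell\le Q/(ah^2)}\xi(\ell)\bigl(\tfrac{\ell}{n}\bigr)$, to which P\'olya--Vinogradov applies directly because $\xi(\cdot)\bigl(\tfrac{\cdot}{n}\bigr)$ is a non-principal character of conductor at most $4n$.

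The residue-class implementation has a genuine gap that the orthogonality approach avoids. Consider $n$ whose squarefree part is $n_0=2$, i.e.\ $n=2m^2$ (certainly allowed by the hypothesis, since $n$ is not a square). Then for $(b,m)=1$ one has $\chi_b(n)=\bigl(\tfrac{b}{2}\bigr)$, which depends only on $b\bmod 8$. Under your change of variables $b=r(a,h)+8aj$, the quantity $b\bmod 8$ is frozen, so the ``residual character in $j$'' is identically constant, and P\'olya--Vinogradov yields no cancellation at all---the inner sum is genuinely of size $\asymp Q/(a^2h^2)$, which destroys the bound. Put differently, you claim the residual character is ``non-principal of conductor $\le n_0$'', but no non-principal Dirichlet character of conductor $\le 2$ exists, so the claimed verification cannot go through when $n_0=2$. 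The paper sidesteps this: $\xi$ is a character modulo $4$ that is \emph{multiplied in}, not a restriction, and $\xi(\cdot)\bigl(\tfrac{\cdot}{2}\bigr)$ is a non-principal character modulo $8$, so the inner sum is $O(1)$. If you insist on the residue-class route, you must not fix $b$ modulo anything divisible by $8$: keep the congruence modulo $4$ only (the condition $d\equiv 1\pmod 4$ together with oddness of $a$ and $h$ gives exactly a class mod $4$), and handle the coprimality $(b,a)=1$ by a further M\"obius sum rather than by passing to a class modulo $8a$; that preserves the oscillation of $\bigl(\tfrac{2}{b}\bigr)$ and closes the gap.
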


 \begin{proof}
We only prove the estimate when $\delta=1$, since the proof for the case $\delta=-1$ follows similarly. Writing $\mu^2(d)= \sum_{h^2\mid d}\mu(h)$ we get
\begin{equation}\label{CHARSum}
\begin{aligned}
\sideset {}{^\flat}\sum_{\substack{0< d \leq Q\\ d\equiv 1\pmod 4\\ (d, m)=1}} \chi_d(n)
&= \sum_{\substack{d\leq Q\\ d\equiv 1 \pmod 4\\ (d, m)=1}}\mu^2(d)\chi_d(n)\\
&=\frac{1}{2}
 \sum_{\xi\pmod 4}\sum_{\substack{h\leq \sqrt{Q}\\ (h, m)=1}}\mu(h)\sum_{\substack{d\leq Q\\ h^2|d\\ (d, m)=1}}\xi(d)\left(\frac{d}{n}\right)\\
 &= \frac{1}{2}
 \sum_{\xi\pmod 4}\sum_{\substack{h\leq \sqrt{Q}\\ (h, m)=1}}\mu(h)\xi(h^2)\left(\frac{h^2}{n}\right)\sum_{\substack{b\leq Q/h^2 \\ (b, m)=1}}\xi(b)\left(\frac{b}{n}\right)\\
 &= \frac{1}{2}
 \sum_{\xi\pmod 4}\sum_{\substack{h\leq \sqrt{Q}\\ (h, m)=1}}\mu(h)\xi(h^2)\left(\frac{h^2}{n}\right)\sum_{a|m}\mu(a)\sum_{\substack{b\leq Q/h^2 \\ a|b}}\xi(b)\left(\frac{b}{n}\right)\\
 &= \frac{1}{2}
 \sum_{\xi\pmod 4}\sum_{\substack{h\leq \sqrt{Q}\\ (h, m)=1}}\mu(h)\xi(h^2)\left(\frac{h^2}{n}\right)\sum_{a|m}\mu(a)\xi(a)\left(\frac{a}{n}\right)\sum_{\ell\leq Q/(ah^2)}\xi(\ell)\left(\frac{\ell}{n}\right).
 \end{aligned}
 \end{equation}
Now, since $\xi(\cdot)\left(\frac{\cdot}{n}\right)$ is a non-principal character of conductor at most $4n$, then by the P\'olya-Vinogradov inequality, we have 
$$ \sum_{\ell\leq Q/(a h^2)}\xi(\ell)\left(\frac{\ell}{n}\right) \ll \sqrt{n}\log n.$$
We use this bound in \eqref{CHARSum} if $h\leq Q^{1/2}n^{-1/4}(\log n)^{-1/2}$, and the trivial bound $Q/h^2$ if $h> Q^{1/2}n^{-1/4}(\log n)^{-1/2}$, in order to get
\begin{align*}
\sideset {}{^\flat}\sum_{\substack{0<d \leq Q\\ d\equiv 1\pmod 4\\ (d, m)=1}} \chi_d(n)
&\ll d(m)Q^{1/2}n^{1/4}(\log n)^{1/2}+ d(m)Q\sum_{h>Q^{1/2}n^{-1/4}(\log n)^{-1/2}}\frac{1}{h^2}\\
&\ll d(m)Q^{1/2}n^{1/4}(\log n)^{1/2},
\end{align*}
as desired.
 \end{proof}

\begin{proof}[Proof of Proposition \ref{FirstPRIMES}] Let $L=\pi(y)$. Note that if $(d, P(y))=1$ and $p$ is a prime $\leq y$, then $1+\chi_d(p)\varepsilon(p)=2$ if $\chi_d(p)=\varepsilon(p)$ and equals $0$ otherwise. Therefore, the number of fundamental discriminants $d\equiv 1 \pmod 4$ such that $0<\delta d\leq Q$, $(d,P(y))=1$  and $\chi_d(p)=\varepsilon(p)$ for all primes $p\leq y$ equals
\begin{align*}
& \frac{1}{2^L}\sideset {}{^\flat}\sum_{\substack{0<\delta d\leq Q\\ d\equiv 1 \pmod 4\\ (d, P(y))=1}} \ \prod_{p\leq y}\left(1+\chi_d(p)\varepsilon(p)\right)\\
=& \frac{1}{2^L}\sideset {}{^\flat}\sum_{\substack{0< \delta d\leq Q \\ d\equiv 1 \pmod 4\\ (d, P(y))=1}} \left(1+\sum_{r=1}^L \sum_{p_1<p_2<\cdots<p_r\leq y} \varepsilon(p_1)\cdots \varepsilon(p_r) \chi_d(p_1\cdots p_r)\right)\\
=&  \frac{1}{2^L}\sideset {}{^\flat}\sum_{\substack{0<\delta d\leq Q\\ d\equiv 1 \pmod 4\\ (d, P(y))=1}} 1+ \frac{1}{2^L}\sum_{r=1}^L \sum_{p_1<p_2<\cdots<p_r\leq y} \varepsilon(p_1)\cdots \varepsilon(p_r) \sideset {}{^\flat}\sum_{\substack{0<\delta d\leq Q \\ d\equiv 1 \pmod 4\\ (d, P(y))=1}}\chi_d(p_1\cdots p_r)\\
=& \frac{1}{2^L}\sideset {}{^\flat}\sum_{\substack{0<\delta d\leq Q \\ d\equiv 1 \pmod 4\\ (d, P(y))=1}} 1 +O\left(Q^{1/2} e^{y/3}\right),
\end{align*}
by Lemma \ref{BoundNonSquare} together with the fact that $\prod_{p\leq y}p=e^{y(1+o(1))}$ which follows from the prime number theorem. Finally, using Lemma \ref{CountFundDiscrimi} we get
$$ \sideset {}{^\flat}\sum_{\substack{0<d\leq Q \\ d\equiv 1 \pmod 4\\ (d, P(y))=1}} 1 \gg \frac{Q}{\log y},$$
which completes the proof.
\end{proof}
We also need an $L^{2k}$ bound for the prime sum $\sum_{y<p<z} (\psi\xi\chi_d)(p)/p$, similar to Proposition \ref{LargeSieve}. To this end we establish the following lemma. 

\begin{lem}\label{LargeSieveQuad}
Let $\{a(p)\}_{p \text{ prime}}$ be a sequence of complex numbers such that $|a(p)|\leq 1$. Let $A\geq 1$ be fixed and $2\leq y< (\log Q)^A $ be a real number. Put $z=(\log Q)^A$.  Then, for any positive integer $r\leq \log Q/(6A\log \log Q)$ we have 
$$ 
\frac{1}{Q} \sideset {}{^\flat}\sum_{\substack{|d| \leq Q\\ d\equiv 1 \pmod 4}}\left(\sum_{y<p<z}\frac{a(p)\chi_d(p)}{p}\right)^{2r}\ll \frac{(2r)!}{r!}\left(\sum_{y<p<z}\frac{1}{p^2}\right)^r + Q^{-1/3}.
$$

\end{lem}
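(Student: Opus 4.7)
The plan is to follow the scheme of the proof of Proposition \ref{LargeSieve}, with Elliott's $k$-th order large sieve (Lemma \ref{Elliott}) replaced by the quadratic tools developed above, namely Lemmas \ref{CountFundDiscrimi} and \ref{BoundNonSquare}, and the ``$n_1 n_2^{k-1}$ is a $k$-th power'' diagonal condition replaced by ``$n$ is a perfect square''.

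First I would expand
$$
\left(\sum_{y<p<z}\frac{a(p)\chi_d(p)}{p}\right)^{2r} = \sum_n \frac{c(n)\chi_d(n)}{n},
$$
where $c(n) = \sum_{p_1\cdots p_{2r}=n,\, y<p_i<z} a(p_1)\cdots a(p_{2r})$ satisfies $|c(n)|\leq b_{2r}(n)$ with $b_{2r}$ as in \eqref{Combinatorics}. Interchanging summation, the $2r$-th moment becomes
$$
\sum_n \frac{c(n)}{n} \sideset{}{^\flat}\sum_{\substack{|d|\leq Q\\ d\equiv 1\,(4)}}\chi_d(n),
$$
which I split according to whether $n$ is a perfect square.

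For non-square $n$, Lemma \ref{BoundNonSquare} (with $m=1$, applied separately to $d>0$ and $d<0$) bounds the inner sum by $\ll Q^{1/2}n^{1/4}(\log n)^{1/2}$. The hypothesis $r\leq \log Q/(6A\log\log Q)$ forces $z^r\leq Q^{1/6}$, so $n\leq z^{2r}\leq Q^{1/3}$ and $n^{1/4}\leq Q^{1/12}$. Combined with $\sum_n b_{2r}(n)/n = (\sum_{y<p<z}1/p)^{2r} = Q^{o(1)}$, the off-diagonal contribution totals $\ll Q^{7/12+o(1)}$, comfortably within the $Q\cdot Q^{-1/3}$ error allowed. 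For square $n$ one has $\chi_d(n) = 1$ whenever $(d,n)=1$ and $0$ otherwise, so Lemma \ref{CountFundDiscrimi} evaluates the inner sum as $\tfrac{3Q}{\pi^2}\prod_{p\mid 2n}(1+1/p)^{-1}+O(d(n)Q^{1/2})$, whose error term is absorbed into the off-diagonal estimate.

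What remains is the combinatorial bound
$$
\sum_{n=\square}\frac{b_{2r}(n)}{n}\leq \frac{(2r)!}{r!}\left(\sum_{y<p<z}\frac{1}{p^2}\right)^r.
$$
The cleanest route is via Wick's pairing: the left side equals $\sum_{(p_1,\ldots,p_{2r})} 1/(p_1\cdots p_{2r})$ over ordered $2r$-tuples of primes in $(y,z)$ whose product is a square, and grouping by the pairing of positions carrying equal primes yields the even sharper bound $\tfrac{(2r)!}{2^r r!}(\sum 1/p^2)^r$, with configurations in which a prime appears more than twice contributing strictly less. An alternative is to mimic the closing combinatorial block of the proof of Proposition \ref{LargeSieve} specialized to $k=2$, using $b_{2r}(n_1n_2)\leq \binom{2r}{r}b_r(n_1)b_r(n_2)$ from \eqref{CombInequality} together with $(2s)!\geq (s!)^2$. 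I expect this combinatorial step to be the main technical point; the remaining arguments are routine applications of the quadratic tools already in hand.
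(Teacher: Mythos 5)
Your proof is correct and follows essentially the same structure as the paper's: expand the $2r$-th power, interchange summation, split into square (diagonal) and non-square (off-diagonal) $n$, bound the off-diagonal part via Lemma \ref{BoundNonSquare} using $z^r\le Q^{1/6}$, and reduce the diagonal to a combinatorial estimate for $\sum_{m}b_{2r}(m^2)/m^2$. The only genuine variations are cosmetic: the paper bounds the square-$n$ inner sum trivially by $O(Q)$ rather than invoking Lemma \ref{CountFundDiscrimi}, and it obtains $b_{2r}(m^2)\le\binom{2r}{r}b_r(m)^2\le\frac{(2r)!}{r!}b_r(m)$ from \eqref{CombInequality} and $b_r(m)\le r!$, whereas your Wick-pairing argument gives the (slightly sharper, equally valid) constant $(2r)!/(2^r r!)$.
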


\begin{proof}
First, we extend the sequence $\{a(p)\}_p$ multiplicatively to all positive integers $n>1$ by setting
$$ a(n)=a(p_1)^{\alpha_1}\cdots a(p_j)^{\alpha_j}, \text{ if } n=p_1^{\alpha_1}\cdots p_j^{\alpha_j}.$$
Then, we have 
$$
\left(\sum_{y<p<z}\frac{a(p)\chi_d(p)}{p}\right)^{2r}=\sum_{y^{2r}<n<z^{2r}} \frac{a(n)\chi_d(n)b_{2r}(n)}{n},
$$
where the coefficient $b_{2r}(n)$ is defined in \eqref{Combinatorics}. 
Therefore, using Lemma \ref{BoundNonSquare} we get
\begin{equation}\label{StandardLargeSieve}
\begin{aligned}
\sideset {}{^\flat}\sum_{\substack{|d| \leq Q\\ d\equiv 1 \pmod 4}}\left(\sum_{y<p<z}\frac{a(p)\chi_d(p)}{p}\right)^{2k}
&= \sum_{y^{2r}<n<z^{2r}} \frac{a(n)b_{2r}(n)}{n}\sideset {}{^\flat}\sum_{\substack{|d| \leq Q\\ d\equiv 1 \pmod 4}}\chi_d(n)\\
&\ll Q \sum_{y^r<m<z^r} \frac{b_{2r}(m^2)}{m^2} + Q^{1/2} \sum_{y^{2r}<n<z^{2r}} \frac{b_{2r}(n)}{n^{1/2}}\\
&\ll Q \sum_{y^r<m<z^r} \frac{b_{2r}(m^2)}{m^2} +Q^{1/2} \left(\sum_{y<p<z}\frac{1}{\sqrt{p}}\right)^{2r}.
\end{aligned}
\end{equation}
Furthermore, by \eqref{CombInequality} together with the fact that $b_r(m)\leq r!$ we obtain
$$ 
\sum_{y^r<m<z^r} \frac{b_{2r}(m^2)}{m^2}  \leq \binom{2r}{r} \sum_{y^r<m<z^r} \frac{b_{r}(m)^2}{m^2} \leq \frac{(2r)!}{r!} \sum_{y^r<m<z^r} \frac{b_{r}(m)}{m^2}= \frac{(2r)!}{r!}\left(\sum_{y<p<z}\frac{1}{p^2}\right)^r.
$$
Inserting this bound in \eqref{StandardLargeSieve} and using that 
$\sum_{y<p<z}1/\sqrt{p}\ll \sqrt{z}/\log z,
$
completes the proof.
\end{proof}
We now prove Theorem \ref{EvenOrdersFixedSign}. 

\begin{proof}[Proof of Theorem \ref{EvenOrdersFixedSign}]
First, by Lemma \ref{Pigeonhole} there exist prime numbers $q_1, q_2$ such that $Q^{1/3}<q_1<q_2<2Q^{1/3}$ and a character $\psi$ of order $k$ and conductor $q_1q_2$ such that $\psi(p)=1$ for all primes $p\leq 2(\log Q)/3$. 

Let $\varepsilon=\psi(-1)$, and $\ell<y\leq 2(\log Q)/3$ be a real number to be chosen later. We consider the family of characters $\{\psi \chi_d\}$,
where $d$ ranges over the fundamental discriminants $d\equiv 1\pmod 4$ such that $(d, P(y))=1$ and  $0<\varepsilon \delta d<Q^{1/3}$. Since $\ell, d,$ and $q_1q_2$ are pairwise coprime then $\psi\xi \chi_d$ is primitive and $\psi\chi_d$ is a primitive character of order $k$ and conductor $  |d|q_1q_2\ll Q$. Moreover, note that $\psi \chi_d(-1)= \delta$. 

Let $z=(\log Q)^{10}$. Then, it follows from Proposition \ref{GranvilleSound} that for all but at most $Q^{1/4}$ fundamental discriminants $d$ with $0<\varepsilon \delta d<Q^{1/3}$ we have 
\begin{equation}\label{TruncSpecialFamily}
L(1, \psi\xi \chi_d)
=\prod_{p\leq z}\left(1-\frac{\psi(p)\xi(p) \chi_d(p)}{p}\right)^{-1}\left(1+O\left(\frac{1}{\log\log Q}\right)\right).
\end{equation}
Furthermore, by Proposition \ref{FirstPRIMES} there are at least $\gg_{\ell}Q^{1/3}/(2^{\pi(y)}\log y)$ fundamental discriminants $d\equiv 1 \pmod 4$ such that $0<\varepsilon \delta d<Q^{1/3}$, $(d, P(y))=1$,  and $\chi_d(p)=\xi(p)$ for all primes $p\leq y$, such that $p\nmid \ell $.

Moreover, taking $r=[\log Q/(60\log\log Q)]$ in Lemma \ref{LargeSieveQuad} we obtain that the number of fundamental discriminants $d$ with $0<\varepsilon \delta d<Q^{1/3}$ such that 
$$ \left|\sum_{\log Q<p<z}\frac{\psi(p)\xi(p)\chi_d(p)}{p}\right|>\frac{1}{\log\log Q},$$ is 
$$
\ll Q^{1/3} \left(\frac{4r\log\log Q}{\log Q}\right)^{r} \ll Q^{1/3}\exp\left(-\frac{\log Q}{30\log \log Q}\right),
$$
since $\sum_{\log Q<p}1/p^2\leq 2/(\log Q\log\log Q)$ by the prime number theorem. 

Thus, choosing $y= (\log Q)/50$, we deduce that there are 
$$
\gg_{\ell}Q^{1/3}\exp\left(-\frac{\log Q}{50\log\log Q}\right)
$$
 fundamental discriminants $d\equiv 1\pmod 4$ such that $0<\varepsilon \delta d<Q^{1/3}$, $(d, P(y))=1$,  the asymptotic formula \eqref{TruncSpecialFamily} holds,  $\chi_d(p)=\xi(p)$ for all primes $p\leq y$ with $p\nmid \ell $, and 
$$ \left|\sum_{\log Q<p<z}\frac{\psi(p)\xi(p)\chi_d(p)}{p}\right|\leq \frac{1}{\log\log Q}.$$
For these $d$, we have by \eqref{TruncSpecialFamily} that 
\begin{align*}
L(1, \psi\xi \chi_d)
&=\prod_{\substack{p\leq y\\ p\nmid \ell}}\left(1-\frac{1}{p}\right)^{-1}\exp\left(\sum_{y<p\leq \log Q}\frac{\psi(p)\xi(p) \chi_d(p)}{p}\right)\left(1+O\left(\frac{1}{\log\log Q}\right)\right)\\
&= \frac{\varphi(\ell)}{\ell} e^{\gamma}\log\log Q +O_{\ell}(1),
\end{align*}
as desired. 

\end{proof}

\end{document}